\theoremstyle{plain}
\newtheorem{thm}{Theorem}[section]
\newtheorem{prop}[thm]{Proposition}
\newtheorem{lem}[thm]{Lemma}
\newdefinition{rem}[thm]{Remark}
\newdefinition{defi}[thm]{Definition}
\def\Rnum#1{\uppercase\expandafter{\romannumeral #1}}%roman numeral
\def\coloneqq{\mathrel{\mathop:}=}%
\journal{}
\begin{document}

\begin{frontmatter}

%% Title, authors and addresses

%% use the tnoteref command within \title for footnotes;
%% use the tnotetext command for theassociated footnote;
%% use the fnref command within \author or \address for footnotes;
%% use the fntext command for theassociated footnote;
%% use the corref command within \author for corresponding author footnotes;
%% use the cortext command for theassociated footnote;
%% use the ead command for the email address,
%% and the form \ead[url] for the home page:
%% \title{Title\tnoteref{label1}}
%% \tnotetext[label1]{}
%% \author{Name\corref{cor1}\fnref{label2}}
%% \ead{email address}
%% \ead[url]{home page}
%% \fntext[label2]{}
%% \cortext[cor1]{}
%% \address{Address\fnref{label3}}
%% \fntext[label3]{}

\title{Well-posedness for a generalized derivative nonlinear Schr\"{o}dinger equation}

%% use optional labels to link authors explicitly to addresses:
%% \author[label1,label2]{}
%% \address[label1]{}
%% \address[label2]{}

\author{MASAYUKI HAYASHI AND TOHRU OZAWA}
% \author[1]{MASAYUKI HAYASHI}
% \address[1]{Department of Applied Physics, Waseda University, Tokyo 169-8555, Japan}
% \author[2]{TOHRU OZAWA}
% \address[2]{Department of Applied Physics, Waseda University, Tokyo 169-8555, Japan}

\begin{abstract}
\par We study the Cauchy problem for a generalized derivative nonlinear Schr\"{o}dinger equation with the Dirichlet boundary condition. We establish the local well-posedness results in the Sobolev spaces $H^1$ and $H^2$. Solutions are constructed as a limit of approximate solutions by a method independent 
of a compactness argument. We also discuss the global existence of solutions in the energy space $H^1$.
\end{abstract}

\begin{keyword}
Derivative nonlinear Schr\"{o}dinger equation; Yosida regularization
%% keywords here, in the form: keyword \sep keyword

%% PACS codes here, in the form: \PACS code \sep code

%% MSC codes here, in the form: \MSC code \sep code
%% or \MSC[2008] code \sep code (2000 is the default)

\end{keyword}

\end{frontmatter}

%% \linenumbers

%% main text
\section{Introduction}
 We consider the Cauchy problem for the following generalized derivative nonlinear Schr\"{o}dinger equation (gDNLS) with the Dirichlet boundary condition
 \begin{align}
\begin{cases}
 i \partial_{t}u + \partial_{x}^2 u +i|u|^{2\sigma}\partial_{x}u= 0, \ (t,x) \in 
\mathbb{R} \times \Omega , \\ 
u(t,x)=0,  \ (t,x) \in  \mathbb{R} \times \partial \Omega ,\\
u(0,x) =\varphi (x) , \ x \in \Omega ,
\end{cases}
\label{NLS}
\end{align}
where $u$ is a complex valued function of $(t,x)\in \mathbb{R} \times \Omega$, $\sigma >0$ and $\Omega \subset \mathbb{R}$ is an open interval.  With $\sigma =1$, (\ref{NLS}) has appeared as a model for ultrashort optical pulses \cite{Moses}. The solution of (\ref{NLS}) obeys formally the following charge and energy conservation laws:
\begin{align}
&M(u(t))\coloneqq \int_{\Omega} |u|^2 dx = M(\varphi ), \\
&E(u(t)) \coloneqq \int_{\Omega} \Bigl( |\partial_{x}u|^2 +
\frac{1}{\sigma +1}\mathrm{Im} |u|^{2\sigma}\overline{u}\partial_{x}u \Bigr) dx =E(  \varphi ).
\end{align}
When $\sigma =1$ and $\Omega =\mathbb{R}$, if $u$ is a solution of (\ref{NLS}), the gauge transformed solution $v$ defined by
\begin{align*}
v(t,x)=u(t,x)\exp \left( -\frac{i}{2} \int_{-\infty}^{x} |u(t,y)|^2dy \right) ,
\end{align*}
satisfies the standard derivative nonlinear Schr\"{o}dinger equation (DNLS):
\begin{align}
i\partial_t v +\partial_{x}^2 v +i\partial_x (|v|^2v)= 0, \ (t,x) \in 
\mathbb{R} \times \mathbb{R}.
\label{dNLS}
\end{align}
The DNLS equation appears in plasma physics as a model for the propagation of Alfv\'{e}n waves in magnetized plasma (see \cite{Mio}, \cite{Sulem} ). The Cauchy problem for (\ref{dNLS}) has been studied by many authors. The local and global well-posedness in the Sobolev spaces $H^s$ with $s\geq 1$ is studied in \cite{Tsutsumi}, \cite{Hayashi}, 
\cite{HO1}, \cite{HO2}, \cite{HO3}. Solutions of low regularity have been studied in \cite{Takaoka}, \cite{CKSTT}, \cite{CKSTT2}, \cite{Hr}, \cite{GH}. The DNLS equation in a bounded domain $\Omega =(a,b)$ with zero Dirichlet boundary condition is studied in \cite{Chen}, \cite{Tan}.
\par There are only a few results for the equation (\ref{NLS}) with general exponents $\sigma >0$, as compared with $\sigma =1$. Hao \cite{Hao} proved local well-posedness in $H^{1/2}(\mathbb{R})$ intersected with an appropriate Strichartz space for $\sigma \geq 5/2$ by using the gauge transformation and the Littlewood-Paley decomposition. Liu-Simpson-Sulem \cite{LSS} studied the orbital stability and instability of solitary waves for (\ref{NLS}) depending on the value of $\sigma$. We should note that in \cite{LSS} the existence of $H^1$ solution for $\sigma >0$ with the initial data $\varphi \in H^1(\mathbb{R})$ is assumed. Ambrose-Simpson \cite{Ambrose} proved the existence and uniqueness of solutions $u \in C([0,T];H^2(\mathbb{T}))$ and the existence of solution $u\in L^{\infty}((0,T);H^1(\mathbb{T}))$ for $\sigma \geq 1$. The construction of solutions depends on a compactness argument and the uniqueness of $H^1$-solutions is not proved. Recently, Santos \cite{Santos} proved the existence and uniqueness of 
solutions $u \in L^{\infty}((0,T); H^{3/2}(\mathbb{R})\cap \braket{x}^{-1}H^{1/2}(\mathbb{R}))$ for sufficient small initial data in the case of $1/2 <\sigma <1$. The proof of \cite{Santos} is based on parabolic regularization and smoothing properties associated with the Schr\"{o}dinger group, where the weighted Sobolev space is essential to control the mixed norm $L^p_xL^q_t$. He also proved the existence and uniqueness of solutions $u\in C([0,T] ;H^{1/2}(\mathbb{R}))$ for sufficient small initial data in the case of $\sigma >1$.
\par The aim of this paper is to construct $H^1$ and $H^2$-solutions of (\ref{NLS}) for $\sigma \geq 1/2$. In the case of $1/2 \leq \sigma <1$, the nonlinear term $|u|^{2\sigma}$ is not even $C^2$, and therefore a delicate argument is needed. Our first main result is on the local well-posedness in $H^2$ for $\sigma \geq 1/2$.
\begin{thm}
\label{H2lwp}
Let $\sigma \geq 1/2$. Let $\varphi \in H^2(\Omega ) \cap 
H^1_0(\Omega )$. Then there exists $T>0$ and a unique solution $u \in C([-T,T]; H^2(\Omega ) \cap H^1_0(\Omega ))$ of {\rm (\ref{NLS})}. Moreover, $u$ depends continuously on 
$\varphi$ in the following sense. If $\varphi_n \rightarrow \varphi \ in \ H^2(\Omega )$ as $n \rightarrow \infty$ and if 
$u_n$ is the corresponding solution of {\rm (\ref{NLS})}, then $u_n$ is defined on 
the same interval $[-T,T]$ for n large enough and $u_n \rightarrow u \ \mathrm{in} \ C([-T,T]; H^s(\Omega ))$ as $n\rightarrow \infty$ for all $0\leq s <2$.
\end{thm}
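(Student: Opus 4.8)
The plan is to recast (\ref{NLS}) as the abstract equation $\partial_t u = i\partial_x^2 u - |u|^{2\sigma}\partial_x u$ for $u(t)$ valued in $L^2(\Omega)$, where $i\partial_x^2$ with domain $H^2(\Omega)\cap H^1_0(\Omega)$ generates a unitary $C_0$-group by Stone's theorem under the Dirichlet condition. Since $u\mapsto|u|^{2\sigma}\partial_x u$ loses one derivative and, for $1/2\le\sigma<1$, is not even $C^2$, I would not attempt a direct contraction in Strichartz spaces. Instead, following the Yosida-regularization philosophy, I would introduce the smoothing resolvents $J_n=(I-\tfrac1n\partial_x^2)^{-1}$ of the Dirichlet Laplacian and study the regularized problems
\begin{equation*}
\partial_t u_n = i\partial_x^2 u_n - J_n\big(|J_n u_n|^{2\sigma}\partial_x J_n u_n\big),
\end{equation*}
whose nonlinearity maps $L^2$ into $H^2\cap H^1_0$ and is locally Lipschitz on $L^2$ (the smoothing of $J_n$ compensates the derivative loss, and $s\mapsto|s|^{2\sigma}$ is locally Lipschitz for $2\sigma\ge1$). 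Banach's fixed point theorem then yields unique local solutions $u_n$, which are regular enough in $t$ and $x$, by the smoothing of $J_n$ and of the group, to justify the computations below.

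The heart of the argument is a uniform-in-$n$ a priori bound. Charge conservation controls $\|u_n(t)\|_{L^2}$. For the top order I would avoid differentiating the nonlinearity twice, which is impossible for $\sigma<1$, and instead estimate $w_n:=\partial_t u_n$ in $L^2$, recovering the $H^2$ norm afterwards from the equation via $\partial_x^2 u_n=-i\,w_n-i\,J_n(|J_n u_n|^{2\sigma}\partial_x J_n u_n)$ together with the norm equivalence on $H^2(\Omega)\cap H^1_0(\Omega)$. Differentiating the regularized equation in $t$ and pairing with $w_n$, the decisive point is that the self-interaction term has a real coefficient: using the self-adjointness of $J_n$ and writing $\tilde w_n=J_n w_n$,
\begin{equation*}
-\mathrm{Re}\,\langle J_n(|J_n u_n|^{2\sigma}\partial_x\tilde w_n),w_n\rangle=-\mathrm{Re}\,\langle|J_n u_n|^{2\sigma}\partial_x\tilde w_n,\tilde w_n\rangle=\tfrac12\int_\Omega\partial_x(|J_n u_n|^{2\sigma})\,|\tilde w_n|^2\,dx,
\end{equation*}
so the top-order transport contribution integrates by parts with no loss of derivative, leaving only the bounded coefficient $\partial_x(|J_n u_n|^{2\sigma})$. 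The remaining term contains $\partial_t(|J_n u_n|^{2\sigma})=2\sigma|J_n u_n|^{2\sigma-2}\mathrm{Re}(\overline{J_n u_n}\,J_n w_n)$, of size $O(|J_n u_n|^{2\sigma-1}|w_n|)$. Both estimates use only the first derivative of $|\cdot|^{2\sigma}$ and the power $2\sigma-1$, which is where $\sigma\ge1/2$ enters: it keeps $|u|^{2\sigma-1}$ bounded, via $H^2(\Omega)\hookrightarrow W^{1,\infty}(\Omega)$ in one dimension. I thus expect $\frac{d}{dt}\|w_n\|_{L^2}^2\le C(\|u_n\|_{H^2})\|w_n\|_{L^2}^2$ with $\|u_n\|_{H^2}$ itself controlled by $\|w_n\|_{L^2}$ and the conserved charge, so Gronwall's inequality yields a bound on $\|u_n\|_{H^2}$ on a common interval $[-T,T]$ independent of $n$.

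To construct the solution without compactness I would show $(u_n)$ is Cauchy in $C([-T,T];L^2)$ directly from the difference equation: for $d=u_n-u_m$ the nonlinear difference splits into a real-coefficient transport term, handled as above by integration by parts, and a term controlled by $\||J_n u_n|^{2\sigma}-|J_m u_m|^{2\sigma}\|_{L^2}\lesssim\|d\|_{L^2}$, the latter bound again relying on the local Lipschitz continuity of $s\mapsto|s|^{2\sigma}$ for $2\sigma\ge1$. Gronwall then gives a limit $u\in C([-T,T];L^2)$; the uniform $H^2$ bound and interpolation upgrade this to $u_n\to u$ in $C([-T,T];H^s)$ for every $0\le s<2$ and place $u\in L^\infty(-T,T;H^2\cap H^1_0)$, and the limit solves (\ref{NLS}). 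Strong time-continuity $u\in C([-T,T];H^2\cap H^1_0)$ would follow by first establishing $\partial_t u\in C([-T,T];L^2)$ from the now exact $w$-equation and then reading off $\partial_x^2 u\in C([-T,T];L^2)$ from the equation. Uniqueness in $C([-T,T];H^2\cap H^1_0)$ is the same $L^2$ difference estimate applied to two genuine solutions, and the continuous-dependence claim follows by running the difference estimate with data $\varphi_n\to\varphi$: the uniform $H^2$ bounds give a common existence interval and $L^2$ convergence, and interpolation against the uniform $H^2$ bound yields convergence in $H^s$ for all $s<2$, which is exactly the asserted range.

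The step I expect to be most delicate is the uniform $H^2$ estimate, and specifically the requirement never to differentiate $|u|^{2\sigma}$ more than once. Reducing the top-order control to $\partial_t u$ rather than $\partial_x^2 u$ is what makes the non-$C^2$ nonlinearity tractable for $1/2\le\sigma<1$, but it forces one to keep the regularization compatible with the real-coefficient cancellation and to justify the $t$-differentiation rigorously at the level of the approximate solutions. Verifying that the operators $J_n$ do not spoil this cancellation, and that all constants in the energy inequality depend only on $\|u_n\|_{H^2}$ so that Gronwall closes uniformly in $n$, is the technical core of the proof.
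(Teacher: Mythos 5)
Your strategy is essentially the paper's: Yosida regularization $J_n=(I-\tfrac1n\partial_x^2)^{-1}$, local solutions of the regularized problem by Banach's fixed point theorem, Kato's device of estimating $\partial_t u_n$ in $L^2$ instead of differentiating the non-$C^2$ nonlinearity twice in $x$, an $L^2$-Cauchy argument with no compactness, interpolation for the $H^s$ convergence, and recovery of $C([-T,T];H^2)$ from the integral equation. However, there is a genuine gap at the point where you close the Gronwall loop: you claim that $\|u_n\|_{H^2}$ is controlled by $\|w_n\|_{L^2}=\|\partial_t u_n\|_{L^2}$ and the conserved charge alone, via the equation. To make that work you must bound the nonlinear term by these quantities, and the best available estimate (the exponents being forced by scaling) is
\begin{align*}
\bigl\| |J_nu_n|^{2\sigma}\partial_x J_nu_n\bigr\|_{L^2}
\le \|u_n\|_{L^{\infty}}^{2\sigma}\|\partial_x u_n\|_{L^2}
\le C\|u_n\|_{L^2}^{(3\sigma+1)/2}\,\|\partial_x^2 u_n\|_{L^2}^{(\sigma+1)/2},
\end{align*}
using $\|u\|_{L^{\infty}}\le C\|u\|_{L^2}^{3/4}\|\partial_x^2u\|_{L^2}^{1/4}$ and $\|\partial_x u\|_{L^2}\le\|u\|_{L^2}^{1/2}\|\partial_x^2u\|_{L^2}^{1/2}$ on $H^2\cap H^1_0$. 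For $1/2\le\sigma<1$ the exponent $(\sigma+1)/2<1$, so Young's inequality absorbs this into $\|\partial_x^2u_n\|_{L^2}$ and your claim is correct. But for $\sigma=1$ absorption requires smallness of the charge, and for $\sigma>1$ it fails outright: $\|u_n\|_{H^2}$ cannot be recovered from $\|w_n\|_{L^2}$ and $\|\varphi\|_{L^2}$, so the inequality $\frac{d}{dt}\|w_n\|_{L^2}^2\le C(\|u_n\|_{H^2})\|w_n\|_{L^2}^2$ does not close. Since the theorem covers all $\sigma\ge1/2$, this is a real hole, not a technicality.

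The paper fills exactly this hole with an intermediate step you skipped: a uniform-in-$n$ bound in $H^1$ on a common interval, obtained not from the $\partial_t$-estimate but from the conservation laws of the regularized problem --- conservation of charge and of the approximate energy $E_n$ --- combined with a Gronwall inequality for the modified energy $\mathcal{E}_n(u)=2M(u)+2E_n(u)+\int_{\Omega}|u|^{4\sigma+2}dx$ (Lemmas \ref{H1lem} and \ref{H1lem1}, estimates (\ref{H1est})--(\ref{H1est2})). With that bound $M_0$ in hand, the nonlinear term is estimated by $\|u_n\|_{L^\infty}^{2\sigma}\|\partial_x u_n\|_{L^2}\le CM_0^{2\sigma+1}$ with no reference to $H^2$ at all, and the equation then gives $\|\partial_x^2u_n\|_{L^2}\le\|\partial_t u_n\|_{L^2}+CM_0^{2\sigma+1}$, after which your $w_n$-Gronwall argument goes through verbatim. (A repair within your own scheme would be to couple a second differential inequality for $\|\partial_x u_n\|_{L^2}$ to the one for $\|w_n\|_{L^2}$; some such extra ingredient is unavoidable.) A smaller point: in the $L^2$-Cauchy step the difference $J_ng(J_nu_n)-J_mg(J_mu_m)$ is not Lipschitz in $u_n-u_m$ alone, since $J_n\ne J_m$; one needs resolvent-difference estimates such as $\|J_n\varphi-J_m\varphi\|_{L^2}\le(\tfrac1n+\tfrac1m)\|\partial_x^2\varphi\|_{L^2}$ (the paper's Lemma \ref{Jm2}), which contribute harmless $O(\tfrac1n+\tfrac1m)$ terms to the Gronwall inequality rather than terms proportional to $\|u_n-u_m\|_{L^2}^2$.
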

%%%%%%%%%%%%%%%%%%%%
\begin{rem}
When $\sigma =1/2$, the nonlinear term $i|u|\partial_xu$ is quadratic. Christ 
\cite{Christ} considered the following Cauchy problem: 
 \begin{align}
\begin{cases}
 i \partial_{t}u + \partial_{x}^2 u +iu\partial_x u= 0, \ t>0, x \in \mathbb{R} ,\\ 
u(0,x) =\varphi (x) , \ x \in \mathbb{R},
\end{cases}
\label{Chr}
\end{align}
he proved the norm inflation in any Sobolev space $H^s(\mathbb{R})$ with $ s \in \mathbb{R}$ (i.e. $\|u(0) \|_{H^s} \ll 1$ but $\|u(t)\|_{H^s} \gg 1$ for some $t \ll 1$). Theorem \ref{H2lwp} tells us that the behavior of the solution of (\ref{NLS}) is very different from that of the solution of (\ref{Chr}) even though both equations have the quadratic nonlinear term with derivative.
\end{rem}
%%%%%%%%%%%%%%%%%%%%
\par The proof of Theorem \ref{H2lwp} proceeds in four steps. We first employ a Yosida-type regularization and construct  approximate solutions. Next, we follow an argument in \cite{Ambrose} and obtain the uniform estimates on the approximate solutions in $H^1$ by using the conservation laws. Under the uniform bounds in $H^1$, we obtain the uniform estimates in $H^2$ by estimating time derivative of approximate solutions. More precisely, we differentiate the equation once in time instead of differentiating twice the equation in space in order to obtain $H^2$ estimates. This enables us to relax the smoothness condition of the nonlinear term. This idea is from Kato \cite{Kato}. Finally, we prove the sequence of approximate solutions is a Cauchy sequence in $L^2$ and construct the solution of (\ref{NLS}) by the completeness of a function space. We remark that the argument of constructing solutions does not need any compactness theorem, for example, the Ascoli-Arzel\`{a} theorem, the Rellich-Kondrachov theorem, the Banach-Alaoglu theorem, etc.
%%%%%%%%%%%%%%%%%%%%%%%%%%%%%%%%%%%%%%%
%%%%%%%%%%%%%%%%%%%%%%%%%%%%%%%%%%%%%%%
\par Santos \cite{Santos} proved the uniqueness in $L^{\infty}((0,T);  H^{3/2}(\mathbb{R})\cap \braket{x}^{-1}H^{1/2}(\mathbb{R})))$ for $1/2 <\sigma <1$. We found that it is not necessary to use the weighted Sobolev space for the uniqueness.  
\begin{thm}
\label{H3/2}
Let $\sigma \geq 1/2$. Let $\varphi \in H^{3/2}(\Omega )\cap H^1_0(\Omega )$ and $T>0$. If $u$ and $v$ are two solutions of (\ref{NLS}) in $L^{\infty}((-T,T);H^{3/2}(\Omega )\cap H^1_0(\Omega ))$ with the same initial data, then $u=v$.
\end{thm}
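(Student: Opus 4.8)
The plan is to prove uniqueness by a direct $L^2$ energy estimate for the difference $w \coloneqq u-v$, which I then upgrade by a logarithmic, Osgood-type argument in order to absorb a borderline loss of derivative, with no recourse to weighted spaces. Subtracting the two equations, $w$ solves $i\partial_t w + \partial_x^2 w + i\big(|u|^{2\sigma}\partial_x u - |v|^{2\sigma}\partial_x v\big) = 0$ with $w=0$ on $\mathbb{R}\times\partial\Omega$ and $w(0)=0$. Since $u,v\in L^\infty((-T,T);H^{3/2}\cap H^1_0)$, I would first record that $\partial_x^2 u,\,\partial_x^2 v\in L^\infty((-T,T);H^{-1/2})$ and that the nonlinear terms lie in $L^\infty((-T,T);L^2)$ (as $|u|^{2\sigma}$ is bounded and $\partial_x u,\partial_x v\in L^2$), so that $\partial_t w\in L^\infty((-T,T);H^{-1/2})$ while $w\in L^\infty((-T,T);H^{3/2})\subset L^2_{\mathrm{loc}}(H^{1/2})$. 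The identity $\frac{d}{dt}\|w\|_{L^2}^2=2\,\mathrm{Re}\,\langle \partial_t w,w\rangle_{H^{-1/2},H^{1/2}}$ then holds (Lions--Magenes), which legitimizes the formal computation; alternatively one regularizes in time and passes to the limit.

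Next I would split the nonlinear difference as $|u|^{2\sigma}\partial_x u - |v|^{2\sigma}\partial_x v = |u|^{2\sigma}\partial_x w + (|u|^{2\sigma}-|v|^{2\sigma})\partial_x v$ and, pairing the equation with $w$, arrive at $\frac{1}{2}\frac{d}{dt}\|w\|_{L^2}^2 = -\,\mathrm{Re}\int_\Omega\big(|u|^{2\sigma}\partial_x w + (|u|^{2\sigma}-|v|^{2\sigma})\partial_x v\big)\overline{w}\,dx$, the contribution of $\partial_x^2 w$ being purely imaginary and its boundary term vanishing because $w\in H^1_0$. For the first piece, $\mathrm{Re}\int_\Omega |u|^{2\sigma}\partial_x w\,\overline{w}\,dx=\tfrac12\int_\Omega|u|^{2\sigma}\partial_x|w|^2\,dx=-\tfrac12\int_\Omega\partial_x(|u|^{2\sigma})\,|w|^2\,dx$, again with no boundary contribution, where $|\partial_x(|u|^{2\sigma})|\le 2\sigma\|u\|_{L^\infty}^{2\sigma-1}|\partial_x u|$ using $H^{3/2}(\Omega)\hookrightarrow L^\infty(\Omega)$ in one dimension. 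For the second piece I would use that $t\mapsto t^{2\sigma}$ is Lipschitz on bounded sets precisely when $\sigma\ge 1/2$, whence $\big||u|^{2\sigma}-|v|^{2\sigma}\big|\le C\,|w|$ with $C=C(\|u\|_{L^\infty},\|v\|_{L^\infty},\sigma)$. Both contributions are then dominated by $C\int_\Omega\big(|\partial_x u|+|\partial_x v|\big)\,|w|^2\,dx$, so the estimate reduces to controlling $\int_\Omega|\partial_x v|\,|w|^2\,dx$ and its analogue with $u$.

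The main obstacle is that this remaining integral is \emph{borderline}: one has $\partial_x u,\partial_x v\in H^{1/2}(\Omega)$, which in one dimension just fails to embed into $L^\infty$, so a straightforward bound by $\|\partial_x v\|_{L^\infty}\|w\|_{L^2}^2$ followed by Gronwall is unavailable --- this is exactly the loss that forced weighted spaces in the earlier work of Santos. I would instead exploit the quantitative embedding $H^{1/2}(\Omega)\hookrightarrow L^p(\Omega)$ with operator norm $O(\sqrt{p})$ as $p\to\infty$ (of Brezis--Gallouet type), together with the Gagliardo--Nirenberg inequality $\|w\|_{L^{2p'}}\le C\,\|w\|_{L^2}^{1-1/p}\,\|w\|_{H^{1/2}}^{1/p}$, where $1/p+1/p'=1$. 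Since $\|w\|_{H^{1/2}}\le \|u\|_{H^{3/2}}+\|v\|_{H^{3/2}}\le R$ is bounded uniformly in $t$, Hölder gives, for every $p\ge 2$, $\int_\Omega|\partial_x v|\,|w|^2\,dx\le C\sqrt{p}\,R^{1+2/p}\,\|w\|_{L^2}^{2(1-1/p)}$. Writing $y(t)\coloneqq\|w(t)\|_{L^2}^2$ and optimizing the free parameter by choosing $p\sim\log(e/y)$ converts this family of bounds into the single differential inequality $y'(t)\le C\,y(t)\,\sqrt{\log(e/y(t))}$.

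Finally I would invoke Osgood's uniqueness criterion: since the modulus $\omega(y)=\sqrt{\log(e/y)}$ satisfies $\int_0 \frac{dy}{y\,\omega(y)}=+\infty$ and $y(0)=\|w(0)\|_{L^2}^2=0$, the only solution of the above inequality is $y\equiv0$, whence $u=v$ on $[-T,T]$. The two places where the hypotheses enter decisively are the Lipschitz estimate on $|{\cdot}|^{2\sigma}$, which requires $\sigma\ge 1/2$, and the regularity $H^{3/2}$, which supplies both the $L^\infty$ bound on $u,v$ and exactly the $H^{1/2}$ control of $w$ that the logarithmic/Osgood argument can accommodate in place of a weighted norm.
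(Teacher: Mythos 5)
Your proposal is correct and follows essentially the same route as the paper: an $L^2$ energy estimate for the difference, the Lipschitz bound on $|\cdot|^{2\sigma}$ (valid precisely for $\sigma \geq 1/2$), the quantitative embedding $\| f\|_{L^p} \leq C\sqrt{p}\, \| f\|_{H^{1/2}}$ combined with H\"older in dual exponents $p,p'$, yielding $\frac{d}{dt}\| u-v\|_{L^2}^2 \leq C(M)\sqrt{p}\, \| u-v\|_{L^2}^{2(1-1/p)}$. The only difference is in the endgame and it is cosmetic: the paper keeps $p$ fixed, integrates $\frac{d}{dt}\| u-v\|_{L^2}^{2/p} \leq C(M)/\sqrt{p}$ and lets $p\to\infty$ (Yudovich), whereas you optimize over $p$ pointwise to get the Osgood inequality $y' \leq C\, y\sqrt{\log (e/y)}$ and invoke Osgood's criterion --- two standard, equivalent formulations of the same argument.
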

Our proof of Theorem \ref{H3/2} is based on Yudovitch type argument \cite{Yud}. Related proofs for the nonlinear Schr\"{odinger} equation are given in \cite{V}, \cite{Ogawa}, \cite{OO}.
\par Next result is on the local well-posedness in $H^1$ for $\sigma \geq 1$.
%%%%%%%%%%%%%%%%%%%%
%%%%%%%%%%%%%%%%%%%%
\begin{thm}
\label{H1lwp}
Let $\sigma \geq 1$ and let $\Omega$ be an unbounded interval. Let $\varphi \in H^1_0(\Omega )$. Then there exists $T>0$ and a unique solution $u \in C([-T,T]; H^1_0(\Omega )) \cap L^4( (-T,T);W^{1,\infty }(\Omega ))$ of {\rm (\ref{NLS})}. Moreover, the following property holds:
\begin{enumerate}[(i)]
\item $u \in L^q((-T,T); W^{1,r}(\Omega ))$ for every admissible pair 
$(q,r)$ i.e. $\ 0 \leq 2/q=1/2-1/r \leq 1/2$.
\item $M(u(t))=M(\varphi )$,  $E(u(t))=E(\varphi )$ for all $t \in [-T,T]$. 
\item $u$ depends continuously on $\varphi$ in the following sense. If $\varphi_n \rightarrow \varphi \ in \ H^1_0(\Omega )$ as $n \rightarrow \infty$ and if 
$u_n$ is the corresponding solution of {\rm (\ref{NLS})}, then $u_n$ is defined on 
the same interval $[-T,T]$ for n large enough and $u_n \rightarrow u \ \mathrm{in} \ C([-T,T]; H^1_0(\Omega ))$.
\end{enumerate}
\end{thm}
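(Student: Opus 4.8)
The plan is to solve the Duhamel (integral) formulation
\[
u(t)=e^{it\partial_x^2}\varphi-\int_0^t e^{i(t-s)\partial_x^2}\bigl(|u|^{2\sigma}\partial_x u\bigr)(s)\,ds ,
\]
by a contraction argument, where $e^{it\partial_x^2}$ is the Schr\"odinger propagator associated with the Dirichlet Laplacian on $\Omega$. The hypothesis that $\Omega$ is \emph{unbounded} enters precisely here: the dispersive estimates for the free evolution on $\mathbb{R}$ transfer to $e^{it\partial_x^2}$ (on a half-line the Dirichlet kernel is the odd reflection of the free kernel and is dominated by it), so the Strichartz, the Kenig--Ponce--Vega local smoothing, and the maximal-function estimates are all at our disposal; on a bounded interval these would fail with derivative losses, which is why the energy method of Theorem \ref{H2lwp} was used there instead. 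First I would fix $T>0$ and work in the complete metric space
\[
X_T=\Bigl\{\,u:\ \|u\|_{L^\infty_T H^1}+\|u\|_{L^4_T W^{1,\infty}_x}+\|\partial_x u\|_{L^\infty_x L^2_T}+\|u\|_{L^{2\sigma}_x L^\infty_T}\le R\,\Bigr\},
\]
with $R\sim\|\varphi\|_{H^1}$, endowed with a weaker $L^\infty_T L^2$ distance, and show that the map $\Phi$ defined by the right-hand side above sends $X_T$ into itself and is a contraction for $T$ small depending only on $\|\varphi\|_{H^1}$.

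The central difficulty, and the main obstacle, is that the nonlinearity $F=|u|^{2\sigma}\partial_x u$ already carries one derivative, so a naive Strichartz bound for $\partial_x u$ would demand control of $\partial_x F\sim|u|^{2\sigma}\partial_x^2 u$, a second derivative unavailable at the $H^1$ level. This loss is exactly compensated by the smoothing gain: placing $F$ in $L^1_x L^2_T$ and invoking the inhomogeneous local smoothing inequalities
\[
\Bigl\|\partial_x\!\int_0^t e^{i(t-s)\partial_x^2}F\,ds\Bigr\|_{L^\infty_T L^2_x}+\Bigl\|\partial_x\!\int_0^t e^{i(t-s)\partial_x^2}F\,ds\Bigr\|_{L^\infty_x L^2_T}\lesssim\|F\|_{L^1_x L^2_T},
\]
together with their Strichartz counterparts producing the $L^4_T W^{1,\infty}_x$ bound, yields one full derivative on the left while only $\partial_x u$ appears inside $F$ on the right. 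The nonlinearity is then estimated by $\|F\|_{L^1_x L^2_T}\le\||u|^{2\sigma}\|_{L^1_x L^\infty_T}\,\|\partial_x u\|_{L^\infty_x L^2_T}$, and the factor $\||u|^{2\sigma}\|_{L^1_x L^\infty_T}=\|u\|_{L^{2\sigma}_x L^\infty_T}^{2\sigma}$ is controlled by the maximal norm in $X_T$ (using $H^1\hookrightarrow L^\infty$ and interpolation when $\sigma>1$). A positive power of $T$ is extracted by H\"older in time, giving the smallness needed for self-mapping and contraction. The condition $\sigma\ge1$ is used twice: it makes $|u|^{2\sigma}$ of class $C^1$, and through $\bigl||u|^{2\sigma}-|v|^{2\sigma}\bigr|\lesssim\bigl(|u|^{2\sigma-1}+|v|^{2\sigma-1}\bigr)|u-v|$ with $2\sigma-1\ge1$ it supplies the Lipschitz bounds on $F$ required for the difference $\Phi(u)-\Phi(v)$.

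Once the fixed point $u$ is obtained, property (i) is immediate: applying the homogeneous and inhomogeneous estimates to the already-controlled right-hand side gives $u\in L^q_T W^{1,r}_x$ for every admissible pair, while $u\in C([-T,T];H^1_0(\Omega))$ follows from the standard continuity-in-time argument for the Duhamel integral, the Dirichlet boundary condition being preserved because $e^{it\partial_x^2}$ maps $H^1_0(\Omega)$ into itself. Uniqueness in the class $C(H^1_0)\cap L^4_T W^{1,\infty}_x$ follows from the same difference estimate together with a routine continuation argument (two such solutions lie in a common ball on a short subinterval and the $T^\theta$ factor forces them to coincide, then one iterates), consistently with Theorem \ref{H3/2}. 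For continuous dependence (iii), the radius $R$ and time $T$ depend only on $\|\varphi\|_{H^1}$, so for $\varphi_n\to\varphi$ in $H^1_0$ the solutions $u_n$ all exist on the common interval $[-T,T]$, and the Lipschitz dependence of the fixed point on the data gives $u_n\to u$ first in the weak metric and then, by bootstrapping the smoothing and Strichartz bounds, in $C([-T,T];H^1_0)$.

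Finally, for the conservation laws (ii) I would not differentiate the energy along the rough solution directly. Instead I would approximate $\varphi$ by $\varphi_n\in H^2(\Omega)\cap H^1_0(\Omega)$, invoke Theorem \ref{H2lwp} to obtain $H^2$ solutions on the common interval $[-T,T]$, for which the computations defining $M$ and $E$ are rigorous and yield $M(u_n(t))=M(\varphi_n)$ and $E(u_n(t))=E(\varphi_n)$, and then pass to the limit using the continuous dependence established above. The only point to check is the continuity of $E$ along $H^1$ convergence: the quadratic term is continuous since $\partial_x u_n\to\partial_x u$ in $L^2$, and the nonlinear term is handled by $\int|u|^{2\sigma}|u||\partial_x u|\,dx\le\|u\|_{L^\infty}^{2\sigma}\|u\|_{L^2}\|\partial_x u\|_{L^2}$ together with the one-dimensional embedding $H^1\hookrightarrow L^\infty$, which also gives its continuity under $H^1$ convergence. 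This completes the proof.
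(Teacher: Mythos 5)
Your overall architecture (Duhamel formulation, Strichartz/smoothing spaces, approximation by the $H^2$ solutions of Theorem \ref{H2lwp} to get the conservation laws) is sensible, but the core step has a genuine gap: the contraction cannot close for \emph{arbitrary} $\varphi\in H^1_0(\Omega)$. Your key nonlinear estimate is
\begin{align*}
\|F\|_{L^1_xL^2_T}\le \|u\|_{L^{2\sigma}_xL^\infty_T}^{2\sigma}\,\|\partial_x u\|_{L^\infty_xL^2_T},
\end{align*}
and you then assert that ``a positive power of $T$ is extracted by H\"older in time.'' This is false for the maximal-function factor: $\|u\|_{L^{2\sigma}_xL^\infty_T}$ involves an $L^\infty$ norm in time, so it does not shrink as $T\to 0$ (by continuity in time it is bounded below by $\|\varphi\|_{L^{2\sigma}}$), and the smoothing inequality forces the remaining factor to be measured exactly in $L^2_T$, leaving no H\"older slack anywhere in the product. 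Consequently the self-map inequality reads $\|\Phi(u)\|_{X_T}\le C\|\varphi\|_{H^1}+CR^{2\sigma+1}$ with $C$ independent of $T$, which closes only if $R\sim\|\varphi\|_{H^1}$ is small; the same coefficient $R^{2\sigma}$ (never a power of $T$) multiplies the difference estimate for $\Phi(u)-\Phi(v)$. This is precisely the well-known obstruction for derivative nonlinearities: the Kenig--Ponce--Vega smoothing scheme you describe yields \emph{small-data} local well-posedness (this is exactly the setting of Santos \cite{Santos}, cited in the introduction), not the large-data statement of Theorem \ref{H1lwp}.

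The missing idea is the gauge transformation. The paper sets $\Lambda=-\tfrac{i}{2}\int_a^x|u|^{2\sigma}\,dy$ and derives an equation for $e^{-\Lambda}\partial_x u$ in which the dangerous term $-i|u|^{2\sigma}\partial_x^2u$ is cancelled by $-2\partial_x\Lambda\,\partial_x^2u$; the remaining nonlinearities $Q_1(u),Q_2(u)$ contain only first derivatives, so they can be placed in dual Strichartz norms that do carry positive powers of $T$, giving the uniform bound (\ref{St3}) for data of arbitrary size. Moreover, the paper runs no fixed-point argument at all: it takes the $H^2$ solutions of Theorem \ref{H2lwp} with data $\varphi_n\to\varphi$ as approximate solutions, proves uniform bounds in $\mathcal{X}(I)$ via the gauged Duhamel formula, shows the sequence is Cauchy in $C(I;L^2)$ by a Gronwall argument, and recovers (i)--(iii) in the limit (energy conservation coming from weak lower semicontinuity combined with uniqueness, rather than from norm continuity of $E$ alone). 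Without the gauge transformation, or some substitute that removes the quadratic-in-derivative interaction, your scheme proves the theorem only under a smallness assumption on $\varphi$, which is strictly weaker than the claim.
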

%%%%%%%%%%%%%%%%%%%
%%%%%%%%%%%%%%%%%%%
Our proof of Theorem \ref{H1lwp} depends on the gauge transformation and the Strichartz estimate. Since the Strichartz estimate does not hold in a bounded domain, we need to assume $\Omega$ is an unbounded interval. We employ $H^2$-solutions constructed in Theorem \ref{H2lwp} as approximate solutions.  Firstly, we derive the differential equation by using the gauge transformation that the spatial derivative of approximate solution should satisfy. Next, we obtain the uniform estimates on the approximate solutions in $L^q_tW^{1,r}_x$ for any admissible pair $(q,r)$ by using the Strichartz estimate. Finally, we prove the sequence of approximate solutions is a Cauchy sequence in $L^2$ and construct 
the $H^1$-solution of (\ref{NLS}). The last step is similar to that of the proof of Theorem \ref{H2lwp}. This method is required that the nonlinear term is of $C^2$, so we need to assume $\sigma \geq 1$. 
\par From the conservation of energy and  iterating Theorem \ref{H1lwp}, we can prove the global well-posedness in $H^1$.
\begin{thm}
\label{H1gwp}
Let $\sigma \geq 1$ and let $\Omega$ be an unbounded interval. Then there 
exist $\varepsilon_0, \ \varepsilon_1>0$ such that if $\varphi \in H^1_0(\Omega )$ satisfies
\begin{align*}
&\| \varphi \|_{L^2} \leq \varepsilon_0 \quad when \ \sigma =1, \\
&\| \varphi \|_{H^1}  \leq \varepsilon_1 \quad when\  \sigma >1 ,
\end{align*}
then there exists a unique solution $u \in C(\mathbb{R}; H^1_0(\Omega )) \cap L^4_{\mathrm{loc}}( \mathbb{R};W^{1,\infty }(\Omega ))$ of {\rm (\ref{NLS})}. Moreover, the following property holds:
\begin{enumerate}[(i)]
\item $u \in L^q_{\mathrm{loc}}(\mathbb{R}; W^{1,r}(\Omega ))$ for every admissible pair 
$(q,r)$.
\item $M(u(t))=M(\varphi )$,  $E(u(t))=E(\varphi )$ for all $t \in \mathbb{R}$ . 
\item $u$ depends continuously on $\varphi$ in the following sense. If $\varphi_n \rightarrow \varphi \ in \ H^1_0(\Omega )$ as $n \rightarrow \infty$ and if 
$u_n$ is the corresponding solution of {\rm (\ref{NLS})}, then $u_n \rightarrow u \ \mathrm{in} \ C([-T,T]; H^1_0(\Omega ))$ for all $T>0$.
\end{enumerate}
\end{thm}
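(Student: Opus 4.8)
The plan is to reduce the global statement to a single uniform-in-time a priori bound on $\|u(t)\|_{H^1}$ and then globalize by iterating Theorem \ref{H1lwp}. The crucial structural fact is that the local existence time $T$ produced by Theorem \ref{H1lwp} can be taken to depend only on $\|\varphi\|_{H^1}$. Consequently, once we know $\sup_{t}\|u(t)\|_{H^1}\leq R$ with $R$ independent of the length of the maximal interval of existence, we may repeatedly reapply the local result on successive intervals of a fixed length $T_0=T_0(R)>0$ and thereby extend $u$ to all of $\mathbb{R}$. The mass and energy conservation in Theorem \ref{H1lwp}(ii) persist on each subinterval and hence hold for all $t\in\mathbb{R}$, property (i) is inherited on each subinterval from Theorem \ref{H1lwp}(i) and summed over the finite covering of any compact time interval, and the continuous dependence (iii) follows by concatenating the local continuous dependence across the finitely many subintervals covering a given $[-T,T]$.

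To produce the a priori bound I would exploit the conserved energy. First I would estimate the nonlinear part of $E$ by the Cauchy--Schwarz inequality followed by the one-dimensional Gagliardo--Nirenberg inequality (valid since $\Omega$ is an unbounded interval):
\begin{align*}
\left| \frac{1}{\sigma +1}\int_{\Omega} \mathrm{Im}\, |u|^{2\sigma}\overline{u}\partial_{x}u\, dx \right|
&\leq \frac{1}{\sigma +1}\| u\|_{L^{2(2\sigma +1)}}^{2\sigma +1}\| \partial_x u\|_{L^2} \\
&\leq C\| u\|_{L^2}^{\sigma +1}\| \partial_x u\|_{L^2}^{\sigma +1},
\end{align*}
where the exponents are computed from $\| u\|_{L^{2(2\sigma +1)}}\leq C\| u\|_{L^2}^{1-\theta}\| \partial_x u\|_{L^2}^{\theta}$ with $\theta =\sigma /(2\sigma +1)$, so that $\theta (2\sigma +1)=\sigma$ and $(1-\theta )(2\sigma +1)=\sigma +1$. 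Combining this with the definition of $E$ and the mass conservation $\| u(t)\|_{L^2}=\| \varphi \|_{L^2}$ yields
\begin{align*}
\| \partial_x u(t)\|_{L^2}^2 \leq E(\varphi ) + C\| \varphi \|_{L^2}^{\sigma +1}\| \partial_x u(t)\|_{L^2}^{\sigma +1}
\end{align*}
for every $t$ in the interval of existence.

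The two cases now diverge. When $\sigma =1$ both sides are quadratic in $\| \partial_x u\|_{L^2}$, so the nonlinear term is absorbable: choosing $\varepsilon_0$ so that $C\varepsilon_0^2\leq 1/2$ gives $\| \partial_x u(t)\|_{L^2}^2\leq 2E(\varphi )$ for all $t$, the desired uniform bound. When $\sigma >1$ the exponent $\sigma +1>2$ makes the nonlinear term super-quadratic, and I would instead run a continuity (bootstrap) argument. Writing $y(t)=\| \partial_x u(t)\|_{L^2}^2$ and $a=C\| \varphi \|_{L^2}^{\sigma +1}$, the inequality reads $h(y(t))\leq E(\varphi )$ with $h(y)=y-a\,y^{(\sigma +1)/2}$. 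Since $h$ increases from $0$ up to its maximum at $y_0=(2/(a(\sigma +1)))^{2/(\sigma -1)}$ and then decreases, the sublevel set $\{ h\leq E(\varphi )\}$ splits into a low branch $[0,y_-]$ and a high branch $[y_+,\infty )$ separated by a gap, provided $E(\varphi )<h(y_0)$. As $y$ is continuous in $t$ (by $u\in C([-T,T];H^1_0)$) and $y(0)=\| \partial_x\varphi \|_{L^2}^2$ lies in the low branch, $y(t)$ cannot cross the gap; hence $y(t)\leq y_-$ for all $t$. Imposing $\| \varphi \|_{H^1}\leq \varepsilon_1$ makes both $E(\varphi )$ and $a$ small enough that $E(\varphi )<h(y_0)$ and $y(0)\leq y_-$ hold simultaneously, giving the uniform bound.

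The main difficulty lies precisely in this coercivity analysis. Because the potential term carries a derivative, it is only controlled by $\| \partial_x u\|_{L^2}^{\sigma +1}$ rather than by a strictly lower power of $\| \partial_x u\|_{L^2}$; thus for $\sigma =1$ the bound is genuinely threshold-type, requiring the mass to lie below a constant fixed by the sharp Gagliardo--Nirenberg constant, while for $\sigma >1$ no bound can hold for arbitrary data. This is why the smallness hypotheses are essential, and the delicate point is to verify that the chosen $\varepsilon_0,\varepsilon_1$ guarantee the branch separation and place the initial datum on the correct branch, after which the iteration of Theorem \ref{H1lwp} closes and yields the global solution together with properties (i)--(iii).
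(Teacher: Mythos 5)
Your proposal is correct, and for $\sigma>1$ it follows the same strategy as the paper: derive an a priori $H^1$ bound from the conservation laws, trap the solution on the low branch of a two-branch sublevel set by continuity in time, and then globalize by iterating Theorem \ref{H1lwp}, whose existence time depends only on $\|\varphi\|_{H^1}$. The technical difference is in how the potential term is estimated. The paper uses the Sobolev embedding $\|u\|_{L^{4\sigma+2}}\leq c\|u\|_{H^1}$ and runs the trapping argument on the single function $f_{\sigma}(\|u\|_{H^1})=\|u\|_{H^1}^2-\tfrac{c}{\sigma+1}\|u\|_{H^1}^{2\sigma+2}$ of the full $H^1$ norm, which forces the smallness hypothesis $\|\varphi\|_{H^1}<\delta$ together with $M(\varphi)+E(\varphi)<f_{\sigma}(\delta)$. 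You instead use Gagliardo--Nirenberg to split off the conserved mass, obtaining $|G(u)|\leq C\|\varphi\|_{L^2}^{\sigma+1}\|\partial_x u\|_{L^2}^{\sigma+1}$, and run the trapping argument on $h(y)=y-ay^{(\sigma+1)/2}$ with $y=\|\partial_x u\|_{L^2}^2$ and the coefficient $a$ controlled by the conserved mass. This refinement buys something real: for $\sigma=1$ the nonlinear term becomes exactly quadratic in $\|\partial_x u\|_{L^2}$ with small coefficient $C\|\varphi\|_{L^2}^2$, so it can be absorbed under $L^2$-smallness alone, with no restriction on $\|\partial_x\varphi\|_{L^2}$ — precisely the hypothesis stated in the theorem. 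The paper does not prove this case at all; it cites \cite{HO1} and \cite{Wu}. So your argument is self-contained where the paper is not. Two minor points you should make explicit: (a) under your smallness conditions $E(\varphi)\geq 0$ automatically (it follows from $h(y(0))\leq E(\varphi)$ and $y(0)<y_0$), which is needed for the low branch $[0,y_-]$ to be the one containing $y(0)$; and (b) on the low branch one has the quantitative bound $y_-\leq \tfrac{\sigma+1}{\sigma-1}E(\varphi)$ (since $h(y)\geq \tfrac{\sigma-1}{\sigma+1}y$ for $y\leq y_0$), which is the uniform constant actually fed into the iteration.
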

%%%%%%%%%%%%%%%%%%%
\begin{rem}
In the case of $\sigma =1,\ \Omega =\mathbb{R}$, Wu \cite{Wu} proved that if $\| \varphi \|_{L^2}<\sqrt{4\pi}$, Theorem \ref{H1gwp} follows by using sharp Gagliardo-Nirenberg inequality and the momentum conservation law
\begin{align*}
P(u(t))\coloneqq \mathrm{Im} \int_{\mathbb{R}} \overline{u}\partial_{x}u \ dx =P(\varphi ).
\end{align*}
\end{rem}
%%%%%%%%%%%%%%%%%%%
%%%%%%%%%%%%%%%%%%%
\par In the case of $\thinspace \sigma <1$, we obtain the following result.
\begin{thm}
\label{H1global}
Let $\thinspace 0< \sigma <1$. Let $\varphi \in H^1_0(\Omega )$. Then there exists a solution $u \in (C_w \cap L^{\infty})(\mathbb{R}; H^1_0(\Omega ))$ of {\rm (\ref{NLS})}. In 
addition, 
\begin{align*}
M(u(t))=M(\varphi ) \quad and \quad E(u(t))\leq E(\varphi )
\end{align*}
for all $t \in \mathbb{R}$ .
\end{thm}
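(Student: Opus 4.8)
The plan is to construct the solution by approximation and compactness; this is the one result in the paper where a genuine compactness argument seems unavoidable, and this is already visible in the statement: we only get weak-in-time continuity ($C_w$) and the energy \emph{inequality} $E(u(t))\le E(\varphi)$ in place of the conservation laws of Theorems~\ref{H1lwp}--\ref{H1gwp}. First I would regularize the nonlinearity. For $\varepsilon\in(0,1]$ I replace $|u|^{2\sigma}$ by a smooth function $g_\varepsilon(|u|^2)$ with $g_\varepsilon(s)\to s^\sigma$ as $\varepsilon\to0$ (e.g.\ $g_\varepsilon(s)=(s+\varepsilon)^\sigma$), arranged so that the regularized equation retains a conserved mass and a conserved energy $E_\varepsilon$ with $E_\varepsilon\to E$. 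Because $g_\varepsilon$ is now smooth, the argument proving Theorem~\ref{H2lwp} applies (there the hypothesis $\sigma\ge1/2$ was used only to make the nonlinearity $C^2$); taking $\varphi_\varepsilon\in H^2(\Omega)\cap H^1_0(\Omega)$ with $\varphi_\varepsilon\to\varphi$ in $H^1_0(\Omega)$ then yields local $H^2$ solutions $u^\varepsilon$.

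The second step is a uniform-in-$\varepsilon$ a priori bound in $H^1$, and this is where $\sigma<1$ is decisive. Using the conserved mass and energy together with the one-dimensional Gagliardo--Nirenberg inequality $\|u\|_{L^{4\sigma+2}}\le C\|\partial_x u\|_{L^2}^{\sigma/(2\sigma+1)}\|u\|_{L^2}^{(\sigma+1)/(2\sigma+1)}$, the potential part of the energy obeys
\begin{equation*}
\Bigl| \tfrac{1}{\sigma+1}\,\mathrm{Im}\!\int_{\Omega} |u|^{2\sigma}\bar u\,\partial_x u\,dx \Bigr|
\le C\,\|u\|_{L^2}^{\sigma+1}\,\|\partial_x u\|_{L^2}^{\sigma+1}
\le \tfrac12\|\partial_x u\|_{L^2}^2 + C'\,\|u\|_{L^2}^{2(\sigma+1)/(1-\sigma)} ,
\end{equation*}
where the last step is Young's inequality, legitimate precisely because $\sigma+1<2$. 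Hence $\|\partial_x u\|_{L^2}^2\le 2E(u)+C(M(u))$, so mass and energy control the full $H^1$ norm with no smallness assumption; this explains why the result is global and unconditional for $\sigma<1$, in contrast to $\sigma\ge1$. The same estimate holds for $E_\varepsilon$ uniformly in $\varepsilon\in(0,1]$ since $g_\varepsilon(s)\le C(s^\sigma+1)$, giving a bound on $u^\varepsilon$ in $L^\infty(\mathbb{R};H^1_0(\Omega))$ independent of $\varepsilon$, which in particular globalizes each $u^\varepsilon$.

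Third, I pass to the limit. The uniform bound gives weak-$*$ convergence $u^\varepsilon\rightharpoonup u$ in $L^\infty(\mathbb{R};H^1_0)$ along a subsequence. From the equation, $\partial_t u^\varepsilon=i\partial_x^2u^\varepsilon-g_\varepsilon(|u^\varepsilon|^2)\partial_x u^\varepsilon$ is bounded in $L^\infty(\mathbb{R};H^{-1})$ (in one dimension $H^1\hookrightarrow L^\infty$, so $g_\varepsilon(|u^\varepsilon|^2)$ is uniformly bounded and the nonlinear term lies in $L^\infty(L^2)$). An Aubin--Lions argument, using $H^1\hookrightarrow\hookrightarrow L^2_{\mathrm{loc}}\hookrightarrow H^{-1}$ on bounded subintervals, then upgrades this to strong convergence of $u^\varepsilon$ in $C(L^2_{\mathrm{loc}})$, hence a.e.\ convergence. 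Combined with the uniform $L^\infty$ bound and the continuity of $s\mapsto s^\sigma$, this identifies $g_\varepsilon(|u^\varepsilon|^2)\partial_x u^\varepsilon\rightharpoonup |u|^{2\sigma}\partial_x u$ in the sense of distributions, so $u$ is a weak solution. The main obstacle is exactly this limit in the nonlinear term: one must pair the merely weakly convergent factor $\partial_x u^\varepsilon$ against the nonlinearly transformed factor $|u^\varepsilon|^{2\sigma}\bar u^\varepsilon$, which forces the strong local compactness above; the low regularity of $s\mapsto s^\sigma$ for $\sigma<1/2$ and the loss of global compactness when $\Omega$ is unbounded both concentrate here, and are handled by working locally in space and using only distributional identification.

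Finally I would record the properties of $u$. Weak continuity $u\in C_w(\mathbb{R};H^1_0)$ follows from $u\in L^\infty(H^1)$ with $\partial_t u\in L^\infty(H^{-1})$ by a standard argument. The energy inequality $E(u(t))\le E(\varphi)$ comes from weak lower semicontinuity of the kinetic part $\|\partial_x u\|_{L^2}^2$ together with the convergence of the potential part and of $E_\varepsilon(\varphi_\varepsilon)\to E(\varphi)$; equality cannot be asserted because we lack strong $H^1$ convergence. The mass, by contrast, is conserved exactly: since $u\in L^\infty(H^1)$ and $\partial_t u\in L^\infty(H^{-1})$, the map $t\mapsto\|u(t)\|_{L^2}^2$ is absolutely continuous with $\tfrac{d}{dt}\|u\|_{L^2}^2=2\,\mathrm{Re}\,\langle\partial_t u,u\rangle_{H^{-1},H^1}=0$, the last equality because $\mathrm{Re}\langle i\partial_x^2u,u\rangle=0$ and $\mathrm{Re}\int|u|^{2\sigma}\bar u\,\partial_x u=\tfrac12\int\partial_x\bigl(\tfrac{1}{\sigma+1}|u|^{2\sigma+2}\bigr)=0$ under the Dirichlet condition. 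This computation closes in the $H^{-1}$--$H^1$ duality and needs no more regularity than the weak solution already has, which is precisely why mass is conserved while energy is only dissipated in the limit.
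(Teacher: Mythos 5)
Your skeleton matches the paper's Section~\ref{SH12} proof: regularize, use the conserved mass and energy together with Gagliardo--Nirenberg and Young's inequality (exploiting exactly $\sigma+1<2$) to get an unconditional uniform $H^1$ bound, then pass to the limit by compactness, weak lower semicontinuity, and a duality argument for mass conservation. But there is a genuine gap at the first step, in the \emph{choice} of regularization, and it is fatal to the rest. You smooth only the coefficient, replacing $|u|^{2\sigma}$ by $(|u|^2+\varepsilon)^{\sigma}$; the regularized equation still contains $\partial_x u$ in the nonlinearity and so still loses one derivative. Its solutions therefore come only from the $H^2$ local theory, whose continuation criterion (as in the proof of Theorem~\ref{H2lwp}) is through $\|\partial_x u\|_{L^{\infty}}$, equivalently $\|\partial_t u\|_{L^2}$ --- not through the $H^1$ norm. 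Consequently your uniform $H^1$ bound does \emph{not} ``globalize each $u^{\varepsilon}$'': nothing you wrote rules out finite-time blow-up of $\|u^{\varepsilon}(t)\|_{H^2}$ while $\|u^{\varepsilon}(t)\|_{H^1}$ stays bounded, and then you have no approximating sequence on large time intervals at all (note that the gauge-transformation route to an $H^1$ continuation criterion is unavailable here: it is tied to the exact nonlinearity, and $\int_a^x(|u|^2+\varepsilon)^{\sigma}dy$ diverges on an unbounded interval). This is precisely why the paper regularizes with the Yosida operator on \emph{both} sides, $J_m g(J_m\cdot)$ with $J_m=(I-\frac{1}{m}\partial_x^2)^{-1}$ viewed in the $H^{-1}$ framework: since $J_m$ maps $H^{-1}(\Omega)$ into $H^1_0(\Omega)$, the map $u\mapsto J_m g(J_m u)$ is locally Lipschitz from $H^1_0(\Omega)$ to itself, the approximate problem (\ref{NLS3}) is a semilinear ODE in $H^1_0(\Omega)$ whose blow-up alternative is in terms of the $H^1$ norm, and the conserved quantities then give $T_m=\infty$ immediately. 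The regularization must remove the derivative loss, not merely the non-smoothness of $s\mapsto s^{\sigma}$; that is the missing idea.

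There is a second, smaller gap in your energy inequality. Aubin--Lions gives strong convergence only in $L^2_{\mathrm{loc}}$ in space, but $G$ is an integral over all of $\Omega$ (possibly unbounded) and is sign-indefinite, so potential energy can leak to spatial infinity: local convergence alone does not yield $G(u^{\varepsilon}(t))\to G(u(t))$, and without that the inequality $E(u(t))\le E(\varphi)$ does not follow (a negative potential contribution escaping to infinity would break it). The paper closes this by proving mass conservation of the limit \emph{first}: since $\|u_m(t)\|_{L^2}=\|\varphi\|_{L^2}=\|u(t)\|_{L^2}$ and $u_m(t)\rightharpoonup u(t)$ in $L^2(\Omega)$, weak convergence plus convergence of norms upgrades to strong convergence in $L^2(\Omega)$, and then the global Lipschitz estimate of Lemma~\ref{polem} gives the energy inequality. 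You do prove mass conservation of the limit, so this part is repairable by reordering your final steps; the regularization issue above is the one that requires a genuinely different construction.
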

\begin{rem}
When $0< \sigma <1$, we do not need to assume the smallness of the initial data for the global existence of the solution. Since the solution is constructed by a compactness argument, we do not know whether the solutions given in Theorem \ref{H1global} is unique or not. If uniqueness holds in $L^{\infty}(\mathbb{R};H^1_0(\Omega ))$, we can prove 
easily that $E(u(t))=E(\varphi )$ for all $t \in \mathbb{R}$ and that $u \in C(\mathbb{R}; H^1_0(\Omega ))$. 
\end{rem}
\par The paper is organized as follows. Section \ref{SH2} is concerned with local well-posedness in $H^2$. Theorem {\ref{H2lwp}} will be proved in Section \ref{SH2}. Theorem {\ref{H3/2}} will be proved in Section \ref{SH3/2}. Well-posedness in $H^1$ is considered in Section \ref{SH1}. Theorem \ref{H1lwp} and Theorem \ref{H1gwp} will be proved in Section \ref{SH1}. Finally, we prove Theorem \ref{H1global} in Section \ref{SH12}.\\[10pt]
%%%%%%%%%%%
%notations
%%%%%%%%%%%
% \par Throughout the paper we use the following nontations.\\
{\bf Notation.}\quad $C^{\infty}_c (\Omega )$ denotes the space of complex-valued $C^{\infty}$-functions on $\Omega$ with compact support in $\Omega$. For any $p$ with $1\leq p\leq \infty$, $L^p(\Omega )$ denotes the usual Lebesgue space and $p'$ is the dual exponent defined by $1/p+1/p'=1$. The usual scalar product on $L^2(\Omega)$ is denoted by $(\cdot ,\cdot )$. For any $p$ with $1\leq p\leq \infty$ and any $m\in \mathbb{N}$, $W^{m,p}(\Omega )$ denotes the usual Sobolev space of order $m$. If $p=2$, $W^{m,p}(\Omega )$ is also written as $H^m$. If $s>0$ is not an integer, $H^s(\Omega )= \{ f\in L^2(\Omega) ; \| f\|_{H^s(\Omega )}< \infty \}$ with
 \begin{align*}
 \| f\|_{H^s(\Omega )}^2=\| f\|_{H^m(\Omega )}^2+\| \partial_x^m f\|_{H^{r}(\Omega )}^2,
 \end{align*}
 where $m$ is an non-negative integer and $0<r<1$ such that $s=m+r$, and
 \begin{align*}
 \| f\|_{H^r(\Omega )}^2 =\int_{\Omega}\int_{\Omega} \frac{|f(x)-f(y)|^2}{|x-y|^{1+2r}} dxdy .
 \end{align*}
 For $m \in \mathbb{N}$, $H^m_0(\Omega )$ denotes the completion of $C^{\infty}_c(\Omega )$ in $H^m(\Omega )$, and $H^{-m}(\Omega )$ denotes the dual of $H^m_0(\Omega )$. For any interval $I \subset \mathbb{R}$ and any Banach space 
$X$, we denote by $C(I;X)$ (resp. $C_w(I;X)$) the space of strongly (resp. weakly) continuous functions from $I$ to $X$. $L^p(I;X)$ denotes the usual Bochner space and $W^{m,p}(I;X)$ denotes the usual vector-valued Sobolev space. If $G:X\rightarrow \mathbb{R}$ is G\^{a}teaux differentiable and
\begin{align*}
G'(u)v &= \lim_{t \rightarrow 0}\frac{G(u+tv)-G(u)}{t} \\
&=2\mathrm{Re} \bigl( g(u) , v \bigr) 
\end{align*}
for all $u, v \in X$, we denote by $G'(u)=g(u)$. $U(t)=\exp (it\partial_x^2 )$ denotes the free propagator of Schr\"{o}dinger equation. A different positive constant might denoted by the same letter $C$. If necessary, we denote by $C(\ast ,...,\ast )$ constants depending on the quantities appearing in parentheses.

%%%%%%%%%%%%%%%%%%%%%%%%%%%%
%%%%%%%%%%%%%%%%%%%%%%%%%%%%
\section{Well-posedness in $H^2$}
\label{SH2}
\subsection{Approximate solutions}
Let $g(u)$ and $G(u)$ be defined by 
\begin{align*}
g(u)&=i|u|^{2\sigma}\partial_x u ,\\
G(u)&=\frac{1}{\sigma +1} \int_{\Omega}\mathrm{Im} |u|^{2\sigma}\overline{u}\partial_{x}u  dx
\end{align*}
for $\sigma >0$. Then we see that 
\begin{align*}
G \in C^1 (H^1_0(\Omega ); \mathbb{R}),\ G'=g.
\end{align*}
For any nonnegative integer $m$, we consider the following approximate problem:
 \begin{align}
 \begin{cases}
 i \partial_{t}u_m + \partial_{x}^2 u_m +J_mg(J_mu_m)= 0, \\
 u_m(0)=\varphi ,
 \end{cases}
 \label{NLS2}
 \end{align}
where $J_m$ is Yosida type approximation defined by
\begin{align}
\label{Jm}
J_m=\left( I-\frac{1}{m}\partial_x^2 \right)^{-1}.
\end{align}
Note that $\partial_x^2$ is self-adjoint in $L^2(\Omega )$ with domain $H^2(\Omega ) \cap H^1_0(\Omega )$. We recall the following main properties of $J_m$. For the proof one can see \cite{Cazenave}.
\begin{prop}
\label{Jm1}
Let $X$ be any of the spaces $H^2(\Omega),\ H^1_0(\Omega),\ H^{-1}(\Omega),\ $and $L^p(\Omega)$ with $1<p<\infty$ and let $X^{\ast}$ be its dual space.  Then:
\begin{enumerate}[(i)]
\item $\braket{J_mf,g}_{X,X^{\ast}}=\braket{f,J_mg}_{X,X^{\ast}},\ \forall f\in X, \ \forall g\in X^{\ast}$.
% \item $\| J_m \|_{\mathcal{L}(L^2,H^2)}\leq cm, \ c>0$.
\item $J_m\in \mathcal{L}(L^2;H^2)$. 
\item $\| J_m \|_{\mathcal{L}(X,X)} \leq 1$. 
\item  $J_mu \rightarrow u \ in\  X \ (m\rightarrow \infty ), \ \forall u \in X$.
\item $\sup_{m \in \mathbb{N}}\| u_m\|_{X} <\infty ,\ \Rightarrow J_mu_m -u_m \rightharpoonup 0\ in \ X\ (m\rightarrow \infty )$. 
\label{Jmweak}
\end{enumerate}
\end{prop}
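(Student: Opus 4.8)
The plan is to base everything on the self-adjointness and nonnegativity of $A=-\partial_x^2$ on $L^2(\Omega)$ with domain $H^2(\Omega)\cap H^1_0(\Omega)$, writing $J_m=(I+A/m)^{-1}$. Since $\mathrm{spec}(A)\subset[0,\infty)$, the bounded Borel function $\phi_m(\lambda)=(1+\lambda/m)^{-1}$ satisfies $0<\phi_m\le1$ and $\phi_m\to1$ pointwise on $[0,\infty)$, so the functional calculus gives $J_m=\phi_m(A)$ at once. First I would dispose of the $L^2$ statements. The bound $\|J_m\|_{\mathcal L(L^2)}\le\sup_{\lambda\ge0}\phi_m(\lambda)=1$ is immediate; self-adjointness of $J_m$, hence (i) in $L^2$, follows because $\phi_m$ is real-valued; and dominated convergence against the spectral measure $d\|E_\lambda u\|^2$ yields $\|J_mu-u\|_{L^2}^2=\int_0^\infty|\phi_m(\lambda)-1|^2\,d\|E_\lambda u\|^2\to0$, which is (iv) in $L^2$. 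Property (ii) follows from the resolvent equation: for $f\in L^2$ the function $u=J_mf$ obeys $u-\tfrac1m\partial_x^2u=f$, so $\partial_x^2u=m(u-f)\in L^2$ and $u\in H^2$ with a quantitative bound, giving $J_m\in\mathcal L(L^2;H^2)$.

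The $H^1_0$, $H^2$ and $H^{-1}$ statements I would obtain from the same functional calculus. Since $J_m=\phi_m(A)$ commutes with $A^{1/2}$ and with $I+A$, and since $\|u\|_{H^1}^2=\|u\|_{L^2}^2+\|A^{1/2}u\|_{L^2}^2$ while the graph norm $\|(I+A)u\|_{L^2}$ is equivalent to $\|u\|_{H^2}$ on $D(A)=H^2(\Omega)\cap H^1_0(\Omega)$, the contraction (iii) and the convergence (iv) transfer from the $L^2$ case by inserting $A^{1/2}$ or $I+A$ and applying the $L^2$ results to $A^{1/2}u$, respectively $(I+A)u$; note that $J_m$ maps $H^1_0$ into $D(A)\subset H^1_0$, so these are genuine statements in the correct spaces. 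The symmetry (i) for a general $X$ extends from the $L^2$ self-adjointness by density of the pairing, and the $H^{-1}$ assertions then follow by duality, $H^{-1}=(H^1_0)^{\ast}$, together with (i).

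The genuinely different point, and the one I expect to be the \emph{main obstacle}, is the $L^p$ case with $1<p<\infty$, where the spectral theorem is unavailable. Here I would use the subordination formula $J_m=\int_0^\infty e^{-s}\,e^{-(s/m)A}\,ds$, which expresses $J_m$ as an average of the Dirichlet heat semigroup $e^{-tA}$. Because $e^{-tA}$ is sub-Markovian, that is, positivity preserving and an $L^\infty$-contraction on the interval $\Omega$ by the maximum principle, hence an $L^1$-contraction by $L^2$-self-adjointness and duality and an $L^p$-contraction by interpolation, the average $J_m$ is also an $L^p$-contraction, giving (iii). For (iv) in $L^p$ I would invoke strong continuity of $e^{-tA}$ on $L^p$: for each fixed $s>0$ one has $e^{-(s/m)A}u\to u$ in $L^p$ as $m\to\infty$, and dominated convergence in the subordination integral, with dominating weight $e^{-s}\|u\|_{L^p}$, yields $J_mu\to u$ in $L^p$.

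Finally, property (v) is a short consequence of (i), (iii) and (iv). Given $\sup_m\|u_m\|_X<\infty$ and any $g\in X^{\ast}$, the symmetry identity gives $\langle J_mu_m-u_m,g\rangle_{X,X^{\ast}}=\langle u_m,J_mg-g\rangle_{X,X^{\ast}}$, whence
\[
|\langle J_mu_m-u_m,g\rangle_{X,X^{\ast}}|\le\Bigl(\sup_m\|u_m\|_X\Bigr)\,\|J_mg-g\|_{X^{\ast}}\longrightarrow0
\]
by (iv) applied in $X^{\ast}$. Thus $J_mu_m-u_m\rightharpoonup0$ in $X$, completing the proof.
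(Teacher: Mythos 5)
You should first note that the paper itself does not prove Proposition~2.1 at all: it merely lists the properties and refers to Cazenave's book, so the comparison is with the standard textbook argument rather than with anything in the paper. Your proof is correct in substance and is a legitimate, self-contained alternative: spectral calculus for the nonnegative self-adjoint Dirichlet operator $A=-\partial_x^2$ disposes of the $L^2$-scale spaces; the subordination formula $J_m=\int_0^\infty e^{-s}e^{-(s/m)A}\,ds$ together with the sub-Markov property of the Dirichlet heat semigroup gives the $L^p$ contraction and convergence; and your derivation of (v) from (i), (iii), (iv) via $\langle J_mu_m-u_m,g\rangle_{X,X^\ast}=\langle u_m,J_mg-g\rangle_{X,X^\ast}$ is exactly the standard argument. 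The textbook route is more elementary on the $L^p$ side: it works directly with the resolvent equation $u-\tfrac1m\partial_x^2u=f$, pairs it with $|u|^{p-2}\bar u$, and uses $\mathrm{Re}\int_\Omega\partial_x^2u\,|u|^{p-2}\bar u\,dx\le0$ to get $\|u\|_{L^p}\le\|f\|_{L^p}$, with no heat-semigroup input; what your version buys is a uniform functional-analytic picture in which (iii) and (iv) for all the spaces come from the same two formulas.

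Two details in your write-up need repair, though both are short fixes rather than flaws in the approach. (a) For $X=H^2(\Omega)$ the proposition must be read on $H^2(\Omega)\cap H^1_0(\Omega)=D(A)$, which is how the paper uses it; on all of $H^2(\Omega)$ items (iii)--(iv) are false (for bounded $\Omega$ and $u\equiv1$ a boundary-layer computation gives $\|J_mu\|_{H^2}\sim m^{3/4}\to\infty$). You do implicitly work on $D(A)$, but your appeal to the equivalence of $\|u\|_{H^2}$ with the graph norm $\|(I+A)u\|_{L^2}$ only yields $\|J_m\|_{\mathcal{L}(H^2)}\le C$, not the constant $1$ claimed in (iii); to get $1$, commute componentwise instead, using $\|\partial_xJ_mu\|_{L^2}=\|J_mA^{1/2}u\|_{L^2}\le\|\partial_xu\|_{L^2}$ and $\|\partial_x^2J_mu\|_{L^2}=\|J_mAu\|_{L^2}\le\|\partial_x^2u\|_{L^2}$ on $D(A)$. (b) Duality gives (iii) on $H^{-1}$, but not the strong convergence (iv) there: adjoints of strongly convergent contractions converge in general only weakly-$\ast$. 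The fix is the standard density argument: $J_m$ is an $H^{-1}$-contraction and $\|J_mf-f\|_{H^{-1}}\le C\|J_mf-f\|_{L^2}\to0$ for $f$ in the dense subspace $L^2(\Omega)$, whence convergence on all of $H^{-1}$. The same density remark is needed for (iv) in $X^\ast=(H^2(\Omega)\cap H^1_0(\Omega))^\ast$, which your proof of (v) invokes when $X=H^2$.
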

If $\varphi \in H^2(\Omega )\cap H^1_0(\Omega )$ is given, by Proposition \ref{Jm1} and the Banach fixed-point theorem, there exists $T_m>0$ and 
$u_m \in C([-T_m, T_m]; H^2(\Omega)\cap H^1_0(\Omega))$ which is a solution of the initial value problem (\ref{NLS2}). \\
\indent Next, we establish the uniform bounds on the solutions in $H^2$ with respect to $m$. This will allow us to construct a solution of (\ref{NLS}) in the limit $m \rightarrow \infty$. We define 
\begin{align*}
g_m(u)=J_m(g(J_mu)) \quad \mathrm{and} \quad G_m(u)=G(J_mu).
\end{align*}
Then we see that
\begin{align*}
G_m \in C^1 (H^1_0(\Omega ); \mathbb{R}),\ G'_m=g_m.
\end{align*}
We introduce an approximate energy:
\begin{align}
E_m(u)=\int_{\Omega} |\partial_{x}u|^2 dx+G_m(u).
\label{menergy}
\end{align}
A standard calculation shows the conservation of charge and energy for the approximate problem.
\begin{lem}
\label{H1lem}
For all $t \in [-T_m,T_m]$ ,
\begin{enumerate}[(i)]
\item $M(u_m(t))=M(\varphi )$, 
\label{1}
\item $E_m(u_m(t))=E_m(\varphi )$.
\label{2}
\end{enumerate}
\end{lem}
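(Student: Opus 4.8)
The plan is to establish the two conservation laws by differentiating the relevant functionals along the flow of the approximate equation \eqref{NLS2} and showing the time derivatives vanish. Since $u_m \in C([-T_m,T_m]; H^2(\Omega)\cap H^1_0(\Omega))$ solves \eqref{NLS2}, I expect $u_m$ to be $C^1$ in time with values in $L^2(\Omega)$ (from the equation, $\partial_t u_m = i\partial_x^2 u_m + iJ_m g(J_m u_m) \in C([-T_m,T_m];L^2)$), which justifies the differentiations below.

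For the charge $M(u_m) = \|u_m\|_{L^2}^2$, I would compute
\begin{align*}
\frac{d}{dt} M(u_m(t)) = 2\,\mathrm{Re}\,(\partial_t u_m, u_m).
\end{align*}
Substituting $\partial_t u_m = i\partial_x^2 u_m + i g_m(u_m)$ with $g_m(u_m)=J_m g(J_m u_m)$, I would use the self-adjointness of $\partial_x^2$ on $H^2(\Omega)\cap H^1_0(\Omega)$ to see $\mathrm{Re}\,(i\partial_x^2 u_m, u_m) = -\mathrm{Im}\,(\partial_x^2 u_m, u_m) = \mathrm{Im}\,\|\partial_x u_m\|_{L^2}^2 = 0$ (after integrating by parts, the boundary terms vanishing because $u_m \in H^1_0$). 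For the nonlinear contribution I would use the symmetry property (i) of Proposition \ref{Jm1} to move $J_m$ onto $u_m$, reducing it to $\mathrm{Re}\,(i\,g(J_m u_m), J_m u_m)$; writing $w = J_m u_m$ and $g(w)=i|w|^{2\sigma}\partial_x w$, this becomes $-\mathrm{Re}\,(|w|^{2\sigma}\partial_x w, w) = -\tfrac12\int_\Omega |w|^{2\sigma}\partial_x(|w|^2)\,dx$, which integrates by parts to a pure boundary term that vanishes since $w=J_m u_m \in H^1_0(\Omega)$. Hence $\frac{d}{dt}M(u_m)=0$.

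For the energy, since $E_m(u) = \|\partial_x u\|_{L^2}^2 + G_m(u)$ with $G_m \in C^1(H^1_0;\mathbb{R})$ and $G_m' = g_m$, I would differentiate using the chain rule and the identification $G_m'(u)v = 2\mathrm{Re}\,(g_m(u),v)$:
\begin{align*}
\frac{d}{dt} E_m(u_m(t)) = 2\,\mathrm{Re}\,(-\partial_x^2 u_m + g_m(u_m), \partial_t u_m).
\end{align*}
Now $-\partial_x^2 u_m + g_m(u_m) = i\partial_t u_m$ directly from \eqref{NLS2}, so the right-hand side equals $2\,\mathrm{Re}\,(i\partial_t u_m, \partial_t u_m) = -2\,\mathrm{Im}\,\|\partial_t u_m\|_{L^2}^2 = 0$. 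This is the cleanest route and avoids computing the variation of $G_m$ term by term.

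The main obstacle I anticipate is purely one of rigor rather than of idea: justifying that $t \mapsto u_m(t)$ is genuinely differentiable into $L^2$ so that the pairings above make sense, and that the integration-by-parts steps are legitimate given only $H^2\cap H^1_0$ regularity. The boundary terms all vanish precisely because $J_m$ maps into $H^1_0$ and because $u_m$ itself lies in $H^1_0$, so the Dirichlet condition is doing the essential work; I would verify carefully that $|w|^{2\sigma}$ is regular enough (for $\sigma \geq 1/2$ one has $|w|^{2\sigma}\in W^{1,1}_{\mathrm{loc}}$ when $w\in H^1$) for the single integration by parts in the charge computation to hold, or alternatively approximate $w$ by smooth functions vanishing at the boundary. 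Once these technical points are settled, both identities follow from the antisymmetry of $i\partial_t$ and the self-adjointness and symmetry structure, exactly as in the standard NLS conservation-law argument.
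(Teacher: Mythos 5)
Your proof of part (i) is correct, and it is exactly the ``standard calculation'' the paper invokes without detail: the self-adjointness of $J_m$ (Proposition \ref{Jm1} (i)) moves both regularizations onto $u_m$, and with $w=J_mu_m\in H^2(\Omega)\cap H^1_0(\Omega)$ the nonlinear contribution becomes $-\tfrac12\int_\Omega|w|^{2\sigma}\partial_x(|w|^2)\,dx=-\tfrac{1}{2(\sigma+1)}\int_\Omega\partial_x\bigl(|w|^{2(\sigma+1)}\bigr)\,dx=0$. The regularity bookkeeping you flag is also handled by what the construction gives: $u_m\in C([-T_m,T_m];H^2\cap H^1_0)$ together with the equation yields $u_m\in C^1([-T_m,T_m];L^2)$, which is all the differentiations require.

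Part (ii), however, does not close as written. The step ``$-\partial_x^2 u_m + g_m(u_m) = i\partial_t u_m$ directly from (\ref{NLS2})'' is a misreading of the equation: (\ref{NLS2}) gives $i\partial_t u_m = -\partial_x^2 u_m - g_m(u_m)$, with a minus sign on $g_m$. Granting the relation $G_m'=g_m$ that you quote, your computation therefore yields $\frac{d}{dt}E_m(u_m)=2\,\mathrm{Re}\,(i\partial_t u_m+2g_m(u_m),\partial_t u_m)=4\,\mathrm{Re}\,(g_m(u_m),\partial_t u_m)$, which has no reason to vanish. What rescues the lemma is that the quoted relation itself carries the opposite sign: with the paper's definitions of $G$ and $g$ and the usual $L^2$ pairing, a direct computation (differentiate $G$ along $u+tv$ and integrate the $\partial_x v$ term by parts) gives $G'(u)v=2\,\mathrm{Im}\int_\Omega|u|^{2\sigma}\partial_x u\,\overline{v}\,dx=-2\,\mathrm{Re}\,\bigl(g(u),v\bigr)$, hence $G_m'(u)v=-2\,\mathrm{Re}\,(g_m(u),v)$. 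With this corrected sign your scheme works verbatim: $\frac{d}{dt}E_m(u_m)=-2\,\mathrm{Re}\,\bigl(\partial_x^2 u_m+g_m(u_m),\partial_t u_m\bigr)=-2\,\mathrm{Re}\,(-i\partial_t u_m,\partial_t u_m)=0$. So your strategy --- differentiate $E_m$, substitute the equation, and use $\mathrm{Re}\,(i\|\partial_t u_m\|_{L^2}^2)=0$ --- is the right one, but as it stands the energy half rests on two sign errors that happen to compensate; to turn it into a proof you must verify the sign of $G_m'$ yourself rather than combine the stated identity $G_m'=g_m$ with an incorrect transcription of (\ref{NLS2}).
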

We need the following lemma to obtain the uniform $H^1$ estimates of $(u_m)_{m \in \mathbb{N}}$.
\begin{lem}
\label{H1lem1}
For any $r \geq 1$ there exists $C>0$ such that
\begin{align*}
\frac{d}{dt} \int_{\Omega} |u_m|^{2r} dx \leq C\bigl( 1+\| u_m\|^2_{H^1} \bigr)^{r+\sigma} ,
\end{align*}
where the positive constant $C$ is independent of $m$. 
\end{lem}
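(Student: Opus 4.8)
The plan is to differentiate the functional $t\mapsto \int_\Omega |u_m|^{2r}\,dx$ directly and insert the equation, rather than to argue abstractly. Since $u_m \in C([-T_m,T_m];H^2(\Omega)\cap H^1_0(\Omega))$ solves (\ref{NLS2}), we have $\partial_t u_m = i\partial_x^2 u_m + iJ_m g(J_m u_m)$ with $\partial_t u_m\in C(\cdot\,;L^2)$ and $u_m(t)\in H^2\hookrightarrow L^\infty$, so the map is $C^1$ in $t$ and differentiation under the integral sign is justified. Using $\mathrm{Re}(iz)=-\mathrm{Im}(z)$, this gives
\[
\frac{d}{dt}\int_\Omega |u_m|^{2r}\,dx = -2r\,\mathrm{Im}\int_\Omega |u_m|^{2(r-1)}\overline{u_m}\bigl(\partial_x^2 u_m + J_m g(J_m u_m)\bigr)\,dx .
\]
I would then split the right-hand side into a linear (Laplacian) piece and a nonlinear piece and bound each by $C\bigl(1+\|u_m\|_{H^1}^2\bigr)^{r+\sigma}$ with $C$ independent of $m$.

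For the linear piece I would integrate by parts once, moving one derivative onto $|u_m|^{2(r-1)}\overline{u_m}$; the boundary terms vanish because $u_m\in H^1_0$ (and $u_m\in H^2$ provides the needed regularity at $\partial\Omega$). Writing $\partial_x(|u_m|^2)=2\mathrm{Re}(\overline{u_m}\partial_x u_m)$, the contribution $\int_\Omega |u_m|^{2(r-1)}|\partial_x u_m|^2\,dx$ that arises is real and drops out under $\mathrm{Im}$, leaving
\[
\mathrm{Im}\int_\Omega |u_m|^{2(r-1)}\overline{u_m}\,\partial_x^2 u_m\,dx = -2(r-1)\int_\Omega |u_m|^{2(r-2)}\mathrm{Re}(\overline{u_m}\partial_x u_m)\,\mathrm{Im}(\overline{u_m}\partial_x u_m)\,dx .
\]
The apparent singularity $|u_m|^{2(r-2)}$ for $1\le r<2$ is harmless: both $\mathrm{Re}(\overline{u_m}\partial_x u_m)$ and $\mathrm{Im}(\overline{u_m}\partial_x u_m)$ are $O(|u_m|\,|\partial_x u_m|)$, so the integrand is dominated by $|u_m|^{2(r-1)}|\partial_x u_m|^2$ (with $2(r-1)\ge0$) and vanishes on $\{u_m=0\}$. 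Hence this piece is bounded by $C\,\|u_m\|_{L^\infty}^{2(r-1)}\|\partial_x u_m\|_{L^2}^2\le C\|u_m\|_{H^1}^{2r}$ via the one-dimensional embedding $H^1\hookrightarrow L^\infty$.

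For the nonlinear piece I would estimate $|\mathrm{Im}(\overline{u_m}J_m g(J_m u_m))|\le |u_m|\,|J_m g(J_m u_m)|$ and apply Cauchy--Schwarz, $\int_\Omega|u_m|^{2r-1}|J_m g(J_m u_m)|\,dx\le \||u_m|^{2r-1}\|_{L^2}\,\|J_m g(J_m u_m)\|_{L^2}$. The decisive choice is to keep the derivative in $L^2$: since $J_m$ is a contraction on $L^2$ and $g(J_m u_m)=i|J_m u_m|^{2\sigma}\partial_x J_m u_m$, one gets $\|J_m g(J_m u_m)\|_{L^2}\le \|J_m u_m\|_{L^\infty}^{2\sigma}\|\partial_x J_m u_m\|_{L^2}$, where $\|\partial_x J_m u_m\|_{L^2}\le\|J_m u_m\|_{H^1}\le\|u_m\|_{H^1}$ and $\|J_m u_m\|_{L^\infty}\le C\|u_m\|_{H^1}$ follow from Proposition \ref{Jm1}\,(iii) together with $H^1\hookrightarrow L^\infty$, while $\||u_m|^{2r-1}\|_{L^2}=\|u_m\|_{L^{2(2r-1)}}^{2r-1}\le C\|u_m\|_{H^1}^{2r-1}$ by $H^1\hookrightarrow L^q$ for $2\le q\le\infty$. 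Multiplying, this piece is $\le C\|u_m\|_{H^1}^{(2r-1)+2\sigma+1}=C\|u_m\|_{H^1}^{2(r+\sigma)}\le C\bigl(1+\|u_m\|_{H^1}^2\bigr)^{r+\sigma}$, and combining with the linear bound (using $\|u_m\|_{H^1}^{2r}\le(1+\|u_m\|_{H^1}^2)^{r+\sigma}$ since $\sigma>0$) proves the lemma. The step to watch, and exactly where $m$-uniformity is secured, is this last distribution of norms: one must place $\partial_x J_m u_m$ in $L^2$ (not in $L^{2r}$, which is \emph{not} uniformly controlled by $\|u_m\|_{H^1}$ when $r>1$) and absorb every remaining factor through the contraction $\|J_m\|\le1$ and the $m$-independent Sobolev embeddings, so that no constant depends on $m$.
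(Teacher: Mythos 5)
Your proof is correct and follows essentially the same route as the paper: differentiate in time, substitute the equation, integrate by parts once on the Laplacian term, and estimate the nonlinear term by Cauchy--Schwarz keeping $\partial_x J_m u_m$ in $L^2$ so that only the contraction property of $J_m$ and $m$-independent Sobolev embeddings enter. The extra observations you make (the vanishing of $\mathrm{Im}\int |u_m|^{2(r-1)}|\partial_x u_m|^2$, the harmlessness of the factor $|u_m|^{2(r-2)}$ for $1\le r<2$, and the justification of differentiation under the integral) are refinements of, not departures from, the paper's argument, which simply bounds $|\partial_x(|u_m|^{2(r-1)}\overline{u_m})|\le C|u_m|^{2(r-1)}|\partial_x u_m|$ and reaches the same estimate.
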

\begin{proof}
The lemma follows from a direct calculation as
\begin{align*}
\frac{d}{dt} \int_{\Omega} |u_m|^{2r} dx &= \int_{\Omega}2r |u_m|^{2(r-1)}\mathrm{Re}(\partial_t u_m \overline{u_m}) \\
&= \int_{\Omega} 2r|u_m|^{2(r-1)} \mathrm{Im}\biggl(  (-\partial_x^2 u-g_m(u_m))\overline{u_m}\biggr) \\
&= \int_{\Omega}2r \mathrm{Im} \biggl( \partial_x u_m \partial_x (|u_m|^{2(r-1)} \overline{u_m})-|u_m|^{2(r-1)} g_m(u_m)\overline{u_m}\biggr) \\[3pt]
&\leq C\bigl( \| u_m\|_{L^{\infty}}^{2(r-1)}\| \partial_x u_m\|_{L^2}^2+\| u_m\|_{L^\infty}^{2(r+\sigma -1)}\| \partial_x u_m\|_{L^2}\| u_m\|_{L^2}\bigr) \\[3pt]
&\leq C\bigl( 1+\| u_m\|^2_{H^1} \bigr)^{r+\sigma}.
\end{align*}
\end{proof}
We derive the uniform bound in $H^1$ for $(u_m)_{m\in \mathbb{N}}$ by Lemma \ref{H1lem} and Lemma \ref{H1lem1}. We have
\begin{align*}
\| u_m\|_{H^1}^2 &=\| u_m\|_{L^2}^2+\| \partial_x u_m\|_{L^2}^2 \\
&=\| u_m\|_{L^2}^2+E_m(u_m)-G_m(u_m).
\end{align*}
Applying Young's inequality, we obtain
\begin{align*}
\| u_m \|_{H^1}^2 \leq M(u_m)+E_m(u_m)+\frac{1}{2}\int_{\Omega}|u_m|^{4\sigma +2} dx +\frac{1}{2}\| \partial_x u_m\|_{L^2}^2.
\end{align*}
Hence,
\begin{align}
\| u_m\|_{H^1}^2 \leq 2M(u_m )+2E_m(u_m )+\int_{\Omega}|u_m|^{4\sigma +2} dx.
\label{H1est}
\end{align}
We introduce the following energy:
\begin{align*}
\mathcal{E}_m(u)=2M(u)+2E_m(u)+\int_{\Omega}|u|^{4\sigma +2} dx.
\end{align*}
Using Lemma \ref{H1lem}, Lemma \ref{H1lem1}, and (\ref{H1est}), we are able to conclude
\begin{align}
\frac{d}{dt}\mathcal{E}_m(u_m) 
\leq C\bigl( 1+\mathcal{E}_m(u_m)\bigr)^{3\sigma +1}.
\label{H1est1}
\end{align}
Estimates (\ref{H1est}) and (\ref{H1est1}) imply that there exists $T_0>0$ such that for all $m \in \mathbb{N}$ such that $u_m$ exists on the time interval $[-T_0,T_0]$ and
\begin{align}
M_0 \coloneqq \sup_{m\in \mathbb{N}}\| u_m \|_{C([-T_0,T_0] ;H^1)} < \infty .
\label{H1est2}
\end{align}
We note that $T_0$ depends on $\| \varphi \|_{H^1}$.\\[5pt]
%%%%%%%%%%%%%%%%%%%%%%%%%%%%%%%%
\par %段落を変える
Next, we establish the uniform $H^2$ estimates of $(u_m)_{m\in \mathbb{N}}$. 
\begin{lem}
There exists $T=T(\| \varphi \|_{H^2})>0$ which is independent of $m$ such that
\begin{align}
&u_m \in C([-T,T]; H^2(\Omega )\cap H^1_0(\Omega )),\ \forall m \in \mathbb{N},\\
& M \coloneqq \sup_{m\in \mathbb{N}} \| u_m\|_{C([-T,T];H^2)} <\infty .
\label{H2est}
\end{align}　
\end{lem}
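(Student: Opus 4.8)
The plan is to bound the approximate solutions in $H^2$ by controlling their time derivatives in $L^2$, following Kato \cite{Kato}. Writing $v_m\coloneqq \partial_t u_m$ and reading (\ref{NLS2}) as $\partial_x^2 u_m=-iv_m-g_m(u_m)$, one has
\begin{align*}
\| \partial_x^2 u_m\|_{L^2}\leq \| v_m\|_{L^2}+\| g_m(u_m)\|_{L^2}\leq \| v_m\|_{L^2}+CM_0^{2\sigma +1},
\end{align*}
where the last bound uses $\| g_m(u_m)\|_{L^2}\leq \| J_mu_m\|_{L^\infty}^{2\sigma}\| \partial_x J_mu_m\|_{L^2}$ together with $H^1\hookrightarrow L^\infty$ and the uniform $H^1$ bound (\ref{H1est2}). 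Hence it suffices to estimate $\| v_m\|_{L^2}$ uniformly in $m$ on a time interval depending only on $\| \varphi\|_{H^2}$. The advantage of differentiating (\ref{NLS2}) once in time rather than twice in space is that only the first derivative $g'$ of the nonlinearity appears, so the delicate smoothness of $|u|^{2\sigma}$ for $\sigma<1$ is never needed.

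Differentiating (\ref{NLS2}) in time, $v_m$ satisfies $i\partial_t v_m+\partial_x^2 v_m+J_m\bigl( g'(J_mu_m)J_mv_m\bigr)=0$. (This is justified rigorously through difference quotients in time, using the smoothing of $J_m$; I proceed formally.) Pairing with $v_m$ in $L^2$, taking the imaginary part, noting that the dispersive contribution drops out, and using the self-adjointness of $J_m$ gives
\begin{align*}
\frac{1}{2}\frac{d}{dt}\| v_m\|_{L^2}^2=-\mathrm{Im}\int_{\Omega} g'(w)(h)\,\overline{h}\,dx,\qquad w\coloneqq J_mu_m,\ h\coloneqq J_mv_m.
\end{align*}

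The heart of the matter is the right-hand side. From $g'(w)h=2i\sigma |w|^{2\sigma -2}\mathrm{Re}(\overline{w}h)\partial_x w+i|w|^{2\sigma}\partial_x h$, the only term carrying a derivative of $h$ is $\mathrm{Re}\int_{\Omega} |w|^{2\sigma}(\partial_x h)\overline{h}\,dx$. The decisive maneuver is to integrate by parts, using $\mathrm{Re}((\partial_x h)\overline{h})=\tfrac12\partial_x|h|^2$ and $h\in H^1_0(\Omega)$, so that
\begin{align*}
\mathrm{Re}\int_{\Omega}|w|^{2\sigma}(\partial_x h)\overline{h}\,dx=-\frac{1}{2}\int_{\Omega}\partial_x(|w|^{2\sigma})\,|h|^2\,dx,
\end{align*}
which no longer contains any derivative of $h$. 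All surviving terms are then of the form $\int_{\Omega} |w|^{2\sigma -1}|\partial_x w|\,|h|^2\,dx$, and the key point is to place $\partial_x w$ in $L^\infty$ rather than $L^2$:
\begin{align*}
\Bigl| \mathrm{Im}\int_{\Omega}g'(w)(h)\overline{h}\,dx\Bigr| \leq C\| w\|_{L^\infty}^{2\sigma -1}\| \partial_x w\|_{L^\infty}\| h\|_{L^2}^2.
\end{align*}
The exponent $2\sigma -1\geq 0$ is exactly where $\sigma \geq 1/2$ is used, ensuring $|w|^{2\sigma -1}$ is bounded. Since $\| w\|_{L^\infty}\leq C\| u_m\|_{H^1}\leq CM_0$, $\| \partial_x w\|_{L^\infty}\leq C\| J_mu_m\|_{H^2}\leq C\| u_m\|_{H^2}$, and $\| h\|_{L^2}\leq \| v_m\|_{L^2}$, we reach $\frac{d}{dt}\| v_m\|_{L^2}^2\leq C(M_0)\,\| u_m\|_{H^2}\,\| v_m\|_{L^2}^2$.

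Finally I would close the estimate self-consistently. Combining with the first display, $\| u_m\|_{H^2}\leq C(1+M_0^{2\sigma +1}+\| v_m\|_{L^2})$, so with $y_m\coloneqq \| v_m\|_{L^2}^2$ one obtains $y_m'\leq C(M_0)(1+y_m)^{3/2}$. The initial value is uniformly bounded, since $v_m(0)=i\partial_x^2\varphi +ig_m(\varphi)$ yields $\| v_m(0)\|_{L^2}\leq C(\| \varphi\|_{H^2})$. A comparison argument then produces $T=T(\| \varphi\|_{H^2})>0$, which we may take $\leq T_0$, together with a bound for $\sup_m\| v_m\|_{C([-T,T];L^2)}$; feeding this back into the first display gives the uniform $H^2$ bound $M$ in (\ref{H2est}). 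Combining with the blow-up alternative for (\ref{NLS2}) in $H^2$ (each $u_m$ extends as long as its $H^2$ norm remains finite) shows that every $u_m$ exists on the common interval $[-T,T]$. I expect the principal difficulty to be precisely the derivative-of-$h$ term: a naive estimate placing $\partial_x w$ in $L^2$ forces an $L^4$ norm of $h=J_mv_m$ whose control costs a derivative and destroys $m$-uniformity, and only the integration by parts followed by absorbing $\| \partial_x w\|_{L^\infty}$ into the very $H^2$ norm being bootstrapped makes the estimate close uniformly in $m$; a secondary point is the rigorous justification of the time differentiation.
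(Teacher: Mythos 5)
Your proposal is correct and follows essentially the same route as the paper's proof: differentiate the equation once in time (Kato's idea), integrate by parts to eliminate the term carrying $\partial_x\partial_t u_m$, recover $\| \partial_x^2 u_m\|_{L^2}$ from the equation itself, bound $\| \partial_x u_m\|_{L^{\infty}}$ by Sobolev embedding, and close the resulting differential inequality $\frac{d}{dt}\| \partial_t u_m\|_{L^2}^2 \leq C(M_0)\bigl( 1+\| \partial_t u_m\|_{L^2}^3\bigr)$ on a time interval depending only on $\| \varphi \|_{H^2}$. The only differences are presentational: you track the $J_m$'s explicitly and spell out the initial bound on $\partial_t u_m(0)$ and the continuation argument, which the paper leaves implicit.
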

%%%%%%%%%%%%%%%%%%%%%%%%%%%%%5
\begin{proof}
We estimate $L^2$ norm of the time derivative of $u_m$ as
\begin{align*}
\frac{d}{dt} \| \partial_t u_m\|_{L^2}^2 &=2\mathrm{Im} \biggl( i\partial_t^2 u_m, \partial_t u_m\biggr) \\
&=-2 \mathrm{Im}\biggl( \partial_t (i|u_m|^{2\sigma}\partial_x u_m), \partial_t u_m\biggr) \\
&=-2 \mathrm{Im}\biggl( i\partial_t (|u_m|^{2\sigma}) \partial_x u_m, \partial_t u_m \biggr) -2\mathrm{Re}\biggl( |u_m|^{2\sigma} \partial_x \partial_t u_m, \partial_t u_m\biggr) \\
&\leq C \| u_m\|_{L^{\infty}}^{2\sigma -1}\| \partial_x u_m\|_{L^{\infty}}\| \partial_t u_m\|_{L^2}^2 , 
\end{align*}
where in the last inequality we have used integration by parts. By Sobolev embedding and (\ref{H1est2}), we obtain
\begin{align*}
\frac{d}{dt} \| \partial_t u_m\|_{L^2}^2 \leq CM_0^{2\sigma -1}\| \partial_x u_m\|_{L^{\infty}}\| \partial_t u_m\|_{L^2}^2 .
\end{align*}
From the equation (\ref{NLS2}), we obtain
\begin{align}
\| \partial_x^2 u_m \|_{L^2} &\leq \| \partial_t u_m\|_{L^2}+\| J_mg_m(J_mu_m)\|_{L^2} \nonumber \\
&\leq \| \partial_t u_m\|_{L^2} +CM_0^{2\sigma +1}.
\label{eqest}
\end{align}
By Sobolev embedding and the conservation of charge,
\begin{align*}
\| \partial_x u_m \|_{L^{\infty}} &\leq C\| u_m\|_{H^2} \\
&\leq C(\| u_m\|_{L^2}+\| \partial_x^2 u_m \|_{L^2}) \\
&\leq C( \| \varphi \|_{L^2} +\| \partial_t u_m\|_{L^2} +CM_0^{2\sigma +1}).
\end{align*}
Applying this estimate, we deduce
\begin{align*}
\frac{d}{dt} \| \partial_t u_m\|_{L^2}^2 &\leq C(M_0) \bigl( 1+ \| \partial_t u_m\|_{L^2}\bigr) \| \partial_t u_m\|_{L^2}^2 \\
&\leq C(M_0) \bigl( 1+\| \partial_t u_m\|_{L^2}^3\bigr) .
\end{align*}
This implies that there exists $T>0$ such that $T\leq T_0$ and
\begin{align}
\sup_{m\in \mathbb{N}}\| \partial_t u_m \|_{C([-T,T] ;L^2)} < \infty .
\label{timeest}
\end{align}
From (\ref{timeest}) and (\ref{eqest}), we obtain the uniform $H^2$ estimate (\ref{H2est}).
 \end{proof}
 
%%%%%%%%%%%%%%%%%%%%%%%%%
\subsection{Convergence of the approximating sequence} 
Here we prove that $u_m$ converges in $C([-T,T];L^2(\Omega ))$ by the uniform $H^2$ estimate (\ref{H2est}). We set $I=[-T,T]$. Before proceeding to the proof, we establish the following lemma.
\begin{lem}
\label{Jm2}
Let $\varphi ,\ \psi \in C^{\infty}_{c}(\Omega )$. Then:
\begin{enumerate}[(i)]
\item $\displaystyle \| J_m\varphi -J_n\varphi \|_{L^2}\leq \left( \frac{1}{m}+\frac{1}{n}\right) \| \partial_x^2 \varphi \|_{L^2}$.
\item  $\displaystyle |(J_m \varphi -J_n \varphi , \psi )|\leq \left( \frac{1}{m}+\frac{1}{n}\right) \| \partial_x \varphi \|_{L^2}\| \partial_x \psi \|_{L^2}$.
\end{enumerate}
\begin{proof}
Let $v_m =J_m\varphi$, $v_n=J_n\varphi$. From the definition of $J_m$,
\begin{align*}
&v_m-\frac{1}{m}\partial_x^2 v_m=\varphi ,\\
&v_n-\frac{1}{n} \partial_x^2 v_n =\varphi .
\end{align*} 
Therefore,
\begin{align*}
v_m -v_n&=\frac{1}{m} \partial_x^2 v_m -\frac{1}{n}\partial_x^2 v_n \\
&=\frac{1}{m}\partial_x^2 (v_m-v_n)+\partial_x^2 v_n \left( \frac{1}{m} -\frac{1}{n}\right) .
\end{align*}
From this identity and Proposition \ref{Jm1}, the result follows.
\end{proof}
\end{lem}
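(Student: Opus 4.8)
The plan is to reduce both estimates to a single algebraic identity for $J_m\varphi - J_n\varphi$ and then read off the bounds from the mapping properties of $J_m$ collected in Proposition \ref{Jm1}. Writing $v_m = J_m\varphi$ and $v_n = J_n\varphi$, the defining relations $v_m - \frac{1}{m}\partial_x^2 v_m = \varphi$ and $v_n - \frac{1}{n}\partial_x^2 v_n = \varphi$ give, after subtracting and regrouping exactly as in the displayed computation, $\left( I - \frac{1}{m}\partial_x^2 \right)(v_m - v_n) = \left( \frac{1}{m} - \frac{1}{n} \right)\partial_x^2 v_n$. Applying $J_m$ and using $\partial_x^2 v_n = \partial_x^2 J_n\varphi = J_n\partial_x^2\varphi$ (the resolvent commutes with $\partial_x^2$ on $C^\infty_c(\Omega)\subset H^2(\Omega)\cap H^1_0(\Omega)$) yields the key identity $v_m - v_n = \left( \frac{1}{m} - \frac{1}{n} \right) J_m J_n \partial_x^2\varphi$. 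Everything else is a consequence of this single relation.

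For (i) I would simply take $L^2$ norms in the identity, use the contraction bound $\|J_m\|_{\mathcal{L}(L^2,L^2)}\leq 1$ and $\|J_n\|_{\mathcal{L}(L^2,L^2)}\leq 1$ from Proposition \ref{Jm1}(iii), and then $\left| \frac{1}{m} - \frac{1}{n} \right| \leq \frac{1}{m} + \frac{1}{n}$, which gives $\|v_m - v_n\|_{L^2}\leq \left( \frac{1}{m} + \frac{1}{n} \right)\|\partial_x^2\varphi\|_{L^2}$ immediately. For (ii) I would instead pair the identity with $\psi$ and transfer the operators onto $\psi$: by the symmetry of $J_m$ and $J_n$ (Proposition \ref{Jm1}(i)) one has $(J_m J_n\partial_x^2\varphi,\psi) = (\partial_x^2\varphi, J_n J_m\psi)$, after which a single integration by parts distributes one derivative onto each factor, $(\partial_x^2\varphi, J_n J_m\psi) = -(\partial_x\varphi, \partial_x J_n J_m\psi) = -(\partial_x\varphi, J_n J_m\partial_x\psi)$, the last step again using that the resolvents commute with $\partial_x$. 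Cauchy--Schwarz together with the contraction property then produces $|(v_m - v_n,\psi)|\leq \left( \frac{1}{m} + \frac{1}{n} \right)\|\partial_x\varphi\|_{L^2}\|\partial_x\psi\|_{L^2}$.

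The only points needing care are the ones living on the boundary of $\Omega$: the vanishing of the boundary terms in the integration by parts, and the legitimacy of commuting $J_m$ (resp.\ $J_n$) with $\partial_x$ and $\partial_x^2$. Both are harmless precisely because $\varphi,\psi\in C^\infty_c(\Omega)$, so compact support in the open interval kills all boundary contributions, and smoothness places $\varphi$, $\psi$, and their derivatives in the domain $H^2(\Omega)\cap H^1_0(\Omega)$ of $\partial_x^2$, making the commutation valid; this is also why the lemma is phrased for smooth compactly supported functions rather than for general $H^2$ elements. I do not expect any genuine analytic obstacle here: the whole content of the lemma is isolating the clean resolvent identity, and the slightly non-sharp constant $\frac{1}{m}+\frac{1}{n}$ is exactly what one pays for bounding $\left| \frac{1}{m}-\frac{1}{n} \right|$.
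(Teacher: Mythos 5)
Your part (i) is correct and is essentially the paper's own argument: you derive the same resolvent identity (the paper stops at $v_m-v_n=\tfrac1m\partial_x^2(v_m-v_n)+(\tfrac1m-\tfrac1n)\partial_x^2 v_n$; inverting $I-\tfrac1m\partial_x^2$ and using $\partial_x^2 J_n\varphi=J_n\partial_x^2\varphi$ — legitimate because $J_n$ is the resolvent of $\partial_x^2$ and $\varphi\in C^\infty_c(\Omega)\subset H^2(\Omega)\cap H^1_0(\Omega)=D(\partial_x^2)$ — gives your formula), and the $L^2$-contraction of $J_m$, $J_n$ from Proposition \ref{Jm1} finishes it.

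Part (ii), however, contains a genuine error: the step $\partial_x (J_nJ_m\psi)=J_nJ_m\partial_x\psi$. The Dirichlet resolvent $J_m=(I-\tfrac1m\partial_x^2)^{-1}$ commutes with its own generator $\partial_x^2$ on $D(\partial_x^2)$, but it does \emph{not} commute with the first derivative $\partial_x$ unless $\Omega=\mathbb{R}$ — and the paper's setting is an arbitrary open interval, including bounded ones. The obstruction is not regularity or the compact support of $\psi$ (your justification), but the boundary condition: $J_m\partial_x\psi$ lies in $H^1_0(\Omega)$ by construction, whereas $\partial_x(J_m\psi)$ in general does not vanish at a finite endpoint. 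Concretely, on $\Omega=(0,\infty)$ one computes $(J_m\psi)'(0)=m\int_0^\infty e^{-\sqrt{m}\,y}\psi(y)\,dy\neq 0$ for $0\le\psi\in C^\infty_c(\Omega)$, $\psi\not\equiv0$, while $(J_m\partial_x\psi)(0)=0$; the difference $\partial_x J_m\psi-J_m\partial_x\psi$ solves $v-\tfrac1m v''=0$ and equals a nonzero multiple of $e^{-\sqrt{m}\,x}$. The repair is short and is what the citation of Proposition \ref{Jm1} must carry in the paper's own terse proof: after your integration by parts, do not commute, but instead bound $\|\partial_x(J_nJ_m\psi)\|_{L^2}\le\|\partial_x\psi\|_{L^2}$, i.e.\ use that the Dirichlet resolvent contracts the homogeneous $H^1$ seminorm. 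This follows either by pairing the equation $J_mw-\tfrac1m\partial_x^2J_mw=w$ with $-\partial_x^2J_mw$ (all boundary terms vanish since $J_mw\in H^2\cap H^1_0$ and $w\in H^1_0$), or from the functional calculus: with $A=-\partial_x^2$ (Dirichlet), $J_m=(I+\tfrac1m A)^{-1}$ commutes with $A^{1/2}$ and $\|A^{1/2}w\|_{L^2}=\|\partial_x w\|_{L^2}$ on $H^1_0(\Omega)$ — note it is $A^{1/2}$, not $\partial_x$, that commutes with $J_m$, which is exactly the distinction your argument misses. With that substitution your proof of (ii) goes through with the same constant.
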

We estimate $L^2$ norm of the difference $u_m-u_n$. A straightforward calculation gives us 
\begin{align*}
\frac{d}{dt}\| u_m-u_n\|_{L^2}^2 &= 2\mathrm{Im}(i\partial_t u_m-i\partial_t u_n, u_m-u_n)\\
&=-2\mathrm{Im}\Bigl( \partial_x^2 u_m-\partial_x^2 u_n,u_m-u_n ) -2\mathrm{Im}(J_mg(J_mu_m)-J_ng(J_nu_n),u_m-u_n)\\
&=-2\mathrm{Im} \biggl[ (J_mg(J_mu_m)-J_ng(J_mu_m),u_m-u_n) \\
&\quad +\Bigl( i(|J_mu_m|^{2\sigma}-|J_nu_m|^{2\sigma})J_m\partial_x u_m, J_n(u_m-u_n) \Bigr) \\
&\quad +\Bigl( i(|J_nu_m|^{2\sigma}-|J_nu_n|^{2\sigma})J_m\partial_x u_m, J_n(u_m-u_n)\Bigr) \\
&\quad +\Bigl( i|J_nu_n|^{2\sigma}(J_m\partial_x u_m-J_n\partial_x u_m),  J_n(u_m-u_n)\Bigr) \\
&\quad +\Bigl( i|J_nu_n|^{2\sigma}(J_n\partial_x u_m-J_n\partial_x u_n),  J_n(u_m-u_n)\Bigr)
\biggr] \\
&=\Rnum{1}_1+\Rnum{1}_2+\Rnum{1}_3+\Rnum{1}_4+\Rnum{1}_5.
\end{align*}
We are going to estimate each of terms $\Rnum{1}_1$, $\Rnum{1}_2$, $\Rnum{1}_3$, $\Rnum{1}_4 $ and $\Rnum{1}_5$. By Lemma \ref{Jm2} the first term is estimated as 
\begin{align*}
\Rnum{1}_1 &\leq 2\left( \frac{1}{m}+\frac{1}{n}\right) \| \partial_x g(J_mu_m)\|_{L^2}\| \partial_x (u_m-u_n)\|_{L^2}\\
&\leq C(M)\left( \frac{1}{m}+\frac{1}{n}\right) .
\end{align*}
Using an elementary inequality
\begin{align*}
||u|^{2\sigma}-|v|^{2\sigma}| \leq c\ (|u|^{2\sigma -1}+|v|^{2\sigma -1})\ |u-v|
\end{align*}
and by Lemma \ref{Jm2}, $\Rnum{1}_2$ is estimated as
\begin{align*}
\Rnum{1}_2 &\leq C(M)(\| J_mu_m\|_{L^{\infty}}^{2\sigma -1}+ \| J_nu_m\|_{L^{\infty}}^{2\sigma -1}) \| J_mu_m-J_nu_m\|_{L^2}\\
&\leq C(M)\left( \frac{1}{m}+\frac{1}{n}\right) \| \partial_x^2 u_m \|_{L^2}\\
&\leq C(M)\left( \frac{1}{m}+\frac{1}{n}\right) .
\end{align*}
%%%%%%%%%%%%%%%%%%%
A similar calculation shows 
\begin{align*}
\Rnum{1}_3 &\leq 2\| J_m\partial_x u_m \|_{L^{\infty} }
\| |J_nu_m|^{2\sigma}-|J_nu_m|^{2\sigma}\|_{L^2} \| J_n(u_m-u_n)\|_{L^2}\\
&\leq C(M)\| u_m-u_n\|_{L^2}^2 .
\end{align*}
By Lemma \ref{Jm2}, $\Rnum{1}_4$ is estimated as
\begin{align*}
\Rnum{1}_4 &\leq 2|(J_m\partial_x u_m-J_n \partial_x u_m,|J_nu_n|^{2\sigma}J_n(u_m-u_n))| \\
&\leq 2\left( \frac{1}{m}+\frac{1}{n}\right) \| \partial_x^2 u_m\|_{L^2}\| \partial_x \bigl( |J_nu_n|^{2\sigma}J_n(u_m-u_n)\bigr) \|_{L^2} \\
&\leq C(M)\left( \frac{1}{m}+\frac{1}{n}\right) .
\end{align*}
Finally, by integration by parts, $\Rnum{1}_5$ is estimated as
\begin{align*}
\Rnum{1}_5&=-2 \mathrm{Re}\left( |J_nu_n|^{2\sigma}(\partial_x J_nu_m-\partial_x J_nu_n),J_nu_m-J_nu_n\right) \\
&=\left( \partial_x (|J_nu_n|^{2\sigma}), |J_nu_m-J_nu_n|^2 \right) \\
&\leq C(M)\| u_m-u_n\|_{L^2}^2 .
\end{align*}
Gathering these estimates, we obtain
\begin{align}
\label{Cauchy1}
\frac{d}{dt} \| u_m-u_n\|_{L^2}^2 \leq C(M)\left( \frac{1}{m}+\frac{1}{n}\right) +
C(M)\| u_m-u_n\|_{L^2}^2.
\end{align}
Applying the Gronwall inequality, we obtain from (\ref{Cauchy1})
\begin{align}
\sup_{t \in I}\| u_m(t)-u_n(t)\|_{L^2}^2 \leq C(M)T\left( \frac{1}{m}+\frac{1}{n}\right) .
\end{align}
Therefore, there exists $u \in C(I; L^2(\Omega))$ such that $u_m \rightarrow u \ \mathrm{in}\ C(I;L^2(\Omega ))$. Using the elementary interpolation estimate
\begin{align*}
\| f\|_{H^s} \leq c\| f\|_{L^2}^{1-s/2}\| f\|_{H^2}^{s/2}, \ 0< s< 2
\end{align*}
and the uniform $H^2$ estimate (\ref{H2est}), we obtain $u \in C(I;H^s(\Omega)\cap H^1_0(\Omega))$ with $0\leq s <2$ such that  $u_m \rightarrow u \ \mathrm{in}\ C(I;H^s(\Omega ))$.\\

%%%%%%%%%%%%%%%%%%%%%%%%%%%
\subsection{Proof of Theorem \ref{H2lwp}}
  We shall prove that the function $u$ satisfies (\ref{NLS}) and lies in $C(I;H^2(\Omega ) \cap H^1_0(\Omega ))$. We note that $u_m$ is a solution of the integral equation
\begin{align}
u_m(t)=U(t)\varphi +i\int_{0}^{t} U(t-s)J_mg(J_mu_m(s))ds. \label{int1}
\end{align}
By Lemma \ref{Jm1} and $u_m(s) \rightarrow u(s) \ \mathrm{in}\ H^1_0(\Omega )$, we have
\begin{align*}
J_mg(J_mu_m(s))-g(u(s))&=J_m\left[ g(J_mu_m(s))-g(J_mu(s))\right]\\
&\quad +J_m\left[g(J_mu(s))-g(u(s)) \right] +J_mg(u(s))-g(u(s))\\
& \longrightarrow 0 \quad \text{as} \ \ m\rightarrow \infty
\end{align*}
in $L^2(\Omega)$ for all $s \in I$. Taking the limit in the integral equation (\ref{int1}) as $m\rightarrow \infty$, we conclude
\begin{align}
u(t) = U(t)\varphi +i\int_{0}^{t} U(t-s)g(u(s)) ds. \label{int2}
\end{align}
We set
\begin{align*}
v(t) = i\int_{0}^{t}U(t-s)g(u(s))ds. 
\end{align*}
Since $g(u) \in C(I; L^2(\Omega))$, it follows that $v\in C^1(I;L^2(\Omega))$. Since $v$ satisfies the equation 
\begin{align}
i\partial_t v+\partial_x^2 v+g(u)=0, 
\end{align}
it follows that $\partial_x^2 v \in C(I; L^2(\Omega))$. Therefore, $u \in C(I; H^2(\Omega))$ follows from the integral equation (\ref{int2}). The uniqueness and continuous dependence is verified by the same argument as in \cite{Ambrose}.
%%%%%%%%%%%%%%%%%%%%%%%%%%%%%%%%%%%
%%%%%%%%%%%%%%%%%%%%%%%%%%%%%%%%%%%
\section{Proof of Theorem {\ref{H3/2}}}
\label{SH3/2}
For the proof of Theorem {\ref{H3/2}}, the following lemma is essential (see, for example, \cite{OO}). 
\begin{lem}
\label{Lp}
Let $p \in [2,\infty )$. For any $u\in H^{1/2}(\Omega )$, 
\begin{align}
\| u \|_{L^p} \leq C\sqrt{p}\| u\|_{H^{1/2}},
\end{align}
where $C$ is independent of $p$.
\end{lem}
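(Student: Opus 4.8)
The plan is to reduce the statement to the whole line and then track the dependence of the embedding constant on $p$ through the Fourier transform. Since $\Omega$ is an interval, there is a bounded extension operator $E\colon H^{1/2}(\Omega)\to H^{1/2}(\mathbb R)$ whose norm is a fixed constant independent of $p$, and for any $u$ one has $\|u\|_{L^p(\Omega)}\le \|Eu\|_{L^p(\mathbb R)}$ since restriction does not increase the $L^p$ norm. Hence it suffices to prove the inequality on $\mathbb R$, where I will use the Fourier-side description $\|u\|_{H^{1/2}(\mathbb R)}^2\simeq \int_{\mathbb R}(1+|\xi|^2)^{1/2}|\hat u(\xi)|^2\,d\xi$, the equivalence constant being absorbed into $C$.

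For $p=2$ the estimate is immediate from Plancherel together with $(1+|\xi|^2)^{1/2}\ge 1$, so assume $p>2$ and set $p'=p/(p-1)\in(1,2)$. First I would apply the Hausdorff--Young inequality $\|u\|_{L^p(\mathbb R)}\le C\|\hat u\|_{L^{p'}(\mathbb R)}$, whose constant is bounded uniformly in $p\in[2,\infty)$. Next, writing $|\hat u|^{p'}=\bigl(|\hat u|^2(1+|\xi|^2)^{1/2}\bigr)^{p'/2}(1+|\xi|^2)^{-p'/4}$ and applying Hölder with the conjugate exponents $2/p'$ and $2/(2-p')$ gives
\[ \|\hat u\|_{L^{p'}}\le C\,\|u\|_{H^{1/2}}\,K(p')^{(2-p')/(2p')},\qquad K(p')=\int_{\mathbb R}\frac{d\xi}{(1+|\xi|^2)^{\alpha}},\quad \alpha=\frac{p'}{2(2-p')}. \]
A short computation shows $\alpha-\tfrac12=\frac{p'-1}{2-p'}>0$, so the weight integral $K(p')$ converges for every $p>2$.

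The heart of the matter is to show that the factor $K(p')^{(2-p')/(2p')}$ is bounded by $C\sqrt p$ uniformly in $p$, which is where the endpoint $p\to\infty$ (equivalently $p'\to 1^+$, $\alpha\to \tfrac12^+$) must be handled carefully, since $\alpha=\tfrac12$ is exactly the borderline of convergence for $K$. Evaluating the integral by the Beta function, $K(p')=\sqrt\pi\,\Gamma(\alpha-\tfrac12)/\Gamma(\alpha)$, and using $\Gamma(\alpha-\tfrac12)\sim(\alpha-\tfrac12)^{-1}$ as $\alpha\to\tfrac12^+$ together with the identity $p'-1=1/(p-1)$, one finds $K(p')\sim (\alpha-\tfrac12)^{-1}=\frac{2-p'}{p'-1}\sim p$ while the exponent $(2-p')/(2p')\to \tfrac12$; hence $K(p')^{(2-p')/(2p')}\le C\sqrt p$ for large $p$. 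For $p$ in any bounded subinterval of $(2,\infty)$ the same factor is continuous and therefore uniformly bounded, so the bound $\le C\sqrt p$ holds for all $p\in[2,\infty)$. Combining the three displays yields $\|u\|_{L^p(\mathbb R)}\le C\sqrt p\,\|u\|_{H^{1/2}(\mathbb R)}$, and then the claim on $\Omega$ follows through the extension operator. The main obstacle, as indicated, is the uniform control of the near-endpoint blow-up of $K(p')$, ensuring that it is exactly of order $p$ and no worse, so that the resulting constant is $O(\sqrt p)$.
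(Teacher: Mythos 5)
Your proof is correct: the reduction to $\mathbb{R}$ by a ($p$-independent) extension operator, Hausdorff--Young with constant uniform in $p$, and the weighted H\"{o}lder step in which the weight integral blows up like $O(p)$ while its exponent tends to $1/2$, is exactly the classical Trudinger-type argument, and the constant-tracking (including the borderline $\alpha\to\tfrac12^+$ and the compact range of $p$) is handled correctly. The paper does not prove Lemma \ref{Lp} itself but cites \cite{OO}, where the proof is essentially this same Fourier-side computation, so your approach coincides with the intended one.
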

We set 
\begin{align*}
M=\max \{ \| u\|_{L^{\infty}((-T,T);H^{3/2})}, 
 \| v\|_{L^{\infty}((-T,T);H^{3/2})}\} .
\end{align*}
Using integration by parts and H\"{o}lder's inequality, we obtain
\begin{align*}
\frac{d}{dt} \| u-v\|_{L^2}^2&=2\mathrm{Im}(i\partial_t u-i\partial_t v, u-v) \\[5pt] 
&=-2\mathrm{Re}\Bigl( (|u|^{2\sigma}-|v|^{2\sigma})\partial_x u, u-v \Bigr) -2\mathrm{Re}\Bigl( |v|^{2\sigma}(\partial_x u-\partial_x v), u-v \Bigr) \\[5pt]
&\leq C(M) \int_{\Omega} (|\partial_x u|+|\partial_x v|) |u-v|^2 dx \\[5pt]
&\leq C(M) (\| \partial_x u\|_{L^p}+\| \partial_x v\|_{L^p})\| u-v\|_{L^{2p'}}^2 .
\end{align*}
By H\"{o}lder's inequality
\begin{align*}
\| u-v \|_{L^{2p'}} \leq \| u-v\|_{L^2}^{1/p'}\| u-v\|_{L^{\infty}}^{1-1/p'},
\end{align*}
Sobolev embedding and Lemma \ref{Lp}, we obtain
\begin{align}
\frac{d}{dt} \| u-v\|_{L^2}^2 &\leq C(M)\sqrt{p}(\| u\|_{H^{3/2}}+\| v\|_{H^{3/2}}) \| u-v\|_{L^2}^{2(1-1/p)} \notag \\
&\leq C(M)\sqrt{p}\| u-v\|_{L^2}^{2(1-1/p)} , \label{Yud1}
\end{align}
where $C(M)$ is still independent of $p$. It follows from (\ref{Yud1})
\begin{align*}
\frac{d}{dt} \| u-v\|_{L^2}^{2/p} \leq \frac{C(M)}{\sqrt{p}}.
\end{align*}
By integration in time, we deduce
\begin{align}
\| u(t)-v(t)\|_{L^2}^2 \leq \left(  \frac{C(M)T}{\sqrt{p}}\right) ^p \label{Yud2} 
\end{align}
for all $t \in (-T,T)$. Since the RHS of (\ref{Yud2}) goes to $0$ as $p\rightarrow \infty$, we conclude $u=v$. 
%%%%%%%%%%%%%%%%%%%%%%%%%%%%%%%%%%%
%%%%%%%%%%%%%%%%%%%%%%%%%%%%%%%%%%%
\section{Well-posedness in $H^1$}
\label{SH1}
 In this section, we consider $H^1$ solutions of (\ref{NLS}). Specifically, we shall prove Theorem \ref{H1lwp} and Theorem \ref{H1gwp}.
\subsection{The gauge transformation}
 Let $u$ is a solution of (\ref{NLS}). We derive a differential equation of $\partial_x u$. To that end, we follow an idea in \cite{Ozawa}. We define the differential operator
 \begin{align*}
 L=i\partial_t +\partial_x^2 .
 \end{align*}
 A direct calculation shows
 \begin{align}
 e^{\Lambda}L(e^{-\Lambda }\partial_x u)=L\partial_x u+\Bigl((\partial_x \Lambda )^2-L\Lambda \Bigr)\partial_x u -2\partial_x \Lambda \partial_x^2 u ,
 \label{cal1}
 \end{align}
 where $\Lambda$ is a function. We note
 \begin{align}
 L\partial_x u=\partial_x Lu =-i|u|^{2\sigma}\partial_x^2 u -i\partial_x (|u|^{2\sigma})\partial_x u .
 \label{Ldelu}
 \end{align}
 Let $\Omega =(a,b)$ with $-\infty \leq a<b\leq \infty$. To absorb the worst term $ -i|u|^{2\sigma}\partial_x^2 u$ by means of $ -2\partial_x \Lambda \partial_x^2 u$ on the RHS of (\ref{cal1}), we set
 \begin{align}
 \Lambda = -\frac {i}{2} \int_{a}^{x} |u(t,y)|^{2\sigma} dy .
 \end{align}
 We compute $i \partial_t \Lambda$ as
 \begin{align*}
 i\partial_t \Lambda &=\frac{1}{2}\int_{a}^{x} 2\sigma |u|^{2(\sigma -1)}\mathrm{Re}(\overline{u}\partial_t u) dy \\
 &=\sigma \int_{a}^{x} |u|^{2(\sigma -1)}\mathrm{Im}(\overline{u}(
-\partial_x^2 u-i|u|^{2\sigma}\partial_x u)) dy \\
&=-\sigma \mathrm{Im} (|u|^{2(\sigma -1)}\overline{u}\partial_x u)+\sigma \mathrm{Im} \left[ \int_{a}^{x} \partial_x (|u|^{2(\sigma -1)}\overline{u}) \partial_x u dy \right] -\sigma \int_{a}^{x} |u|^{2(2\sigma -1)} \mathrm{Re}(\overline{u}\partial_x u) dy \\
&= -\sigma \mathrm{Im} (|u|^{2(\sigma -1)}\overline{u}\partial_x u)+\sigma \mathrm{Im} \left[ \int_{a}^{x} \partial_x (|u|^{2(\sigma -1)}\overline{u}) \partial_x u dy \right] -\frac{1}{4} |u|^{4\sigma} .
 \end{align*}
 Therefore,
 \begin{align}
 (\partial_x \Lambda )^2 -L\Lambda =\sigma \mathrm{Im} (|u|^{2(\sigma -1)}\overline{u}\partial_x u)-\sigma \mathrm{Im} \left[ \int_{a}^{x} \partial_x (|u|^{2(\sigma -1)}\overline{u}) \partial_x u dy \right] +\frac{i}{2}\partial_x (|u|^{2\sigma}).
 \label{cal2}
 \end{align}
 Collecting (\ref{cal1})-(\ref{cal2}), we obtain
 \begin{align}
 e^{\Lambda}L(e^{-\Lambda }\partial_x u) =Q_1(u)+Q_2(u),
 \end{align}
 where
 \begin{align*}
&Q_1(u)=-\frac{i}{2}\partial_x( |u|^{2\sigma})\partial_x u+\sigma \mathrm{Im}(|u|^{2(\sigma -1)}\overline{u}\partial_x u)\partial_x u,\\
&Q_2(u)=-\sigma \int_{a}^{x} \mathrm{Im} \Bigl( \partial_x (|u|^{2\sigma -2}\overline{u})\partial_x u\Bigr) dy \partial_x u.
 \end{align*}
 To prove Theorem \ref{H1lwp}, we approximate $\varphi \in H^1_0(\Omega )$ by a sequence $(\varphi_n )_{n\in \mathbb{N}}$ such that $\varphi_n \in H^2(\Omega )\cap H^1_0 (\Omega )$ and $\varphi_n \rightarrow \varphi \ \mathrm{in}\ H^1_0(\Omega )$. By Theorem \ref{H2lwp}, 
(\ref{NLS}) has a solution
 \begin{align*}
 u_n \in C([-T_n,T_n]; H^2(\Omega )\cap H^1_0(\Omega ))
 \end{align*}
with $u_n(0)=\varphi_n $. We set $I_n=[-T_n,T_n]$. Since the formal calculation above is justified with $u$ replaced by $u_n$, we obtain 
 \begin{align}
u_n(t) &= U(t) \varphi_n +iG(g(u_n(t))) ,
\label{un1}\\
e^{-\Lambda_n (t)}\partial_x u_n(t) &=U(t)(e^{-\Lambda_n (0)}\partial_x \varphi_n) +i G\Bigl( e^{-\Lambda_n (t)}\bigl( Q_1(u_n(t))+Q_2(u_n(t))\bigr) \Bigr) 
\label{un2} 
\end{align}
for all $t \in I_n$, where
\begin{align*}
&\Lambda_n =- \frac{i}{2}\int_{a}^{x} |u_n(t,y)|^{2\sigma}dy, \\
&G(v)(t)=\int_{0}^{t} U(t-s)v(s)ds .
\end{align*}
%%%%%%%%%%%%%%
%%%%%%%%%%%%%%
\subsection{The uniform estimate in $H^1$}
 To derive the uniform estimate in $H^1$ of the approximate solutions $(u_n)_{n\in \mathbb{N}}$, we use the following Strichartz estimate. The proofs can be found in \cite{Cazenave}.
 \begin{prop}
 \label{Strichartz}
%  \leavevmode \par
Assume $\Omega$ is an unbounded interval, then the following properties hold:
\begin{enumerate}[(i)]
\item  For any $(q,r)$ with $0 \leq 2/q=1/2-1/r \leq 1/2$,
\begin{align*}
\|  U (\cdot ) \varphi \|_{L^q (\mathbb{R}; L^r(\Omega ))} \leq C\ \| \varphi \|_{L^2(\Omega )}.
\end{align*}
\item For any $(q_j,r_j)$ with $0 \leq 2/q_j=1/2-1/r_j \leq 1/2,\ j=1,2$ for any interval 
$I \subset \mathbb{R}$ with $0 \in \overline{I}$，
\begin{align*}
\| G(v)\|_{L^{q_1}(I; L^{r_1})} \leq C\| v\|_{L^{q_2'}(I; L^{r_2'})},
\end{align*}
where the constant $C$ is independent of $I$.
\end{enumerate}
 \end{prop}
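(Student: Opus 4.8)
The plan is to deduce both estimates on an unbounded interval from the classical Strichartz estimates on the whole line by means of an odd reflection. When $\Omega=\mathbb{R}$ there is nothing to reduce: $U(t)$ is the free propagator and (i)--(ii) are precisely the standard homogeneous and inhomogeneous (retarded) Strichartz estimates, whose proof rests on the dispersive bound $\|U(t)\|_{\mathcal{L}(L^1,L^\infty)}\leq C|t|^{-1/2}$ together with the isometry $\|U(t)\|_{\mathcal{L}(L^2,L^2)}=1$, followed by the $TT^\ast$ argument and, for (ii), the Christ--Kiselev lemma; the resulting constant depends only on the admissible exponents and hence is independent of $I$ (see \cite{Cazenave}). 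It therefore suffices to treat a half-line, and after a translation we may assume $\Omega=(0,\infty)$, the case $(-\infty,b)$ being identical.

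For $f$ on $\Omega$ let $\widetilde{f}$ denote its odd extension to $\mathbb{R}$, i.e.\ $\widetilde{f}(x)=f(x)$ for $x>0$ and $\widetilde{f}(x)=-f(-x)$ for $x<0$, and let $U_{\mathbb{R}}(t)=\exp(it\partial_x^2)$ be the free propagator on $\mathbb{R}$. The key point is the reflection identity
\begin{align*}
\bigl(U(t)\varphi\bigr)(x)=\bigl(U_{\mathbb{R}}(t)\widetilde{\varphi}\bigr)(x),\qquad x\in\Omega .
\end{align*}
Since the reflection $Rf(x)=f(-x)$ commutes with $\partial_x^2$ on $\mathbb{R}$, the closed subspace $L^2_{\mathrm{odd}}(\mathbb{R})=\{f:Rf=-f\}$ is invariant under $U_{\mathbb{R}}(t)$, and the odd extension intertwines the Dirichlet operator $\partial_x^2$ on $H^2(\Omega)\cap H^1_0(\Omega)$ with the free operator $\partial_x^2$ restricted to $L^2_{\mathrm{odd}}(\mathbb{R})$; by the functional calculus this intertwines the two unitary groups. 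Restricting back to $\Omega$ then yields the identity for $\varphi\in H^2(\Omega)\cap H^1_0(\Omega)$, and for general $\varphi\in L^2(\Omega)$ by density. Applying the same reflection in the $x$ variable to the Duhamel integral gives $G(v)=\bigl(G_{\mathbb{R}}(\widetilde{v})\bigr)|_{\Omega}$, where $G_{\mathbb{R}}$ is the retarded operator built from $U_{\mathbb{R}}$ (the integral of odd functions being odd).

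With the reflection identity in hand the estimates transfer at once. Restricting to $\Omega\subset\mathbb{R}$ does not increase any $L^r$-norm, while the odd extension satisfies $\|\widetilde{f}\|_{L^r(\mathbb{R})}=2^{1/r}\|f\|_{L^r(\Omega)}$, so for (i)
\begin{align*}
\|U(\cdot)\varphi\|_{L^q(\mathbb{R};L^r(\Omega))}\leq \|U_{\mathbb{R}}(\cdot)\widetilde{\varphi}\|_{L^q(\mathbb{R};L^r(\mathbb{R}))}\leq C\|\widetilde{\varphi}\|_{L^2(\mathbb{R})}=C\sqrt{2}\,\|\varphi\|_{L^2(\Omega)},
\end{align*}
and for (ii), applying the restriction on the left and the extension bound on the right,
\begin{align*}
\|G(v)\|_{L^{q_1}(I;L^{r_1}(\Omega))}\leq \|G_{\mathbb{R}}(\widetilde{v})\|_{L^{q_1}(I;L^{r_1}(\mathbb{R}))}\leq C\|\widetilde{v}\|_{L^{q_2'}(I;L^{r_2'}(\mathbb{R}))}\leq C\|v\|_{L^{q_2'}(I;L^{r_2'}(\Omega))} .
\end{align*}
The constant inherited from the whole-line estimate in (ii) is independent of $I$, which is exactly the asserted uniformity.

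The only genuinely delicate step is the intertwining claim of the second paragraph, namely that the odd extension maps the Dirichlet domain $H^2(\Omega)\cap H^1_0(\Omega)$ bijectively onto $H^2(\mathbb{R})\cap L^2_{\mathrm{odd}}(\mathbb{R})$ without producing a singular contribution at the reflection point. This is precisely where the boundary condition is used: for $\varphi\in H^2(\Omega)\cap H^1_0(\Omega)$ one has $\varphi(0)=0$, so $\widetilde{\varphi}$ is continuous and $\widetilde{\varphi}'$, being the even extension of $\varphi'$, is continuous as well, whence $\widetilde{\varphi}''$ is the odd extension of $\varphi''\in L^2$ with no Dirac mass at the origin; conversely an odd $H^2(\mathbb{R})$ function vanishes at $0$ and restricts to an element of $H^2(\Omega)\cap H^1_0(\Omega)$. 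Once this regularity is secured, the invariance of $L^2_{\mathrm{odd}}(\mathbb{R})$ and the functional calculus make the remainder routine, and the transfer of norms in the third paragraph is elementary.
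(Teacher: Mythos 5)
Your proof is correct, but it is not the paper's argument, because the paper gives no argument at all: it simply states that "the proofs can be found in \cite{Cazenave}," where the homogeneous and retarded Strichartz estimates are proved for the free propagator on the line (dispersive bound plus $TT^{\ast}$, with the Christ--Kiselev lemma or a direct duality argument for two different admissible pairs). What you add, and what is genuinely needed to make that citation legitimate when $\Omega$ is a half-line with the Dirichlet condition, is the method-of-images reduction. Your version of it is sound: the odd extension $E$ is, up to the factor $\sqrt{2}$, a unitary map of $L^2(\Omega)$ onto $L^2_{\mathrm{odd}}(\mathbb{R})$; your boundary-condition check shows $E$ carries $H^2(\Omega)\cap H^1_0(\Omega)$ bijectively onto $H^2(\mathbb{R})\cap L^2_{\mathrm{odd}}(\mathbb{R})$ with no Dirac mass created at the reflection point, so the two self-adjoint generators are unitarily equivalent and hence so are the groups; the bookkeeping $\|\widetilde{f}\|_{L^r(\mathbb{R})}=2^{1/r}\|f\|_{L^r(\Omega)}$ together with the fact that restriction to $\Omega$ does not increase norms then transfers both (i) and (ii), and the uniformity in $I$ follows since $G(v)(t)$ for $t\in I$ depends only on $v$ restricted to $I$, so one may extend $v$ by zero in time and invoke the global estimate. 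Two small points you handle correctly but that deserve emphasis: the reflection identity and the Duhamel identity must be extended from smooth/domain data to $L^2$ (respectively $L^{q_2'}L^{r_2'}$) data by density, and the retarded estimate with distinct pairs on $\mathbb{R}$ does require Christ--Kiselev (applicable here since $q_2'\leq 4/3<4\leq q_1$). In short, the paper delegates the proposition entirely to the literature; your route supplies the missing reduction and yields, as a bonus, the explicit observation that the Dirichlet Strichartz constants on any unbounded interval coincide with the whole-line ones up to a factor $2^{1/2}$.
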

 Before proceeding the proof, we introduce function spaces. For a time interval $I$, we define $\mathcal{X}_0(I)$ and $\mathcal{X}(I)$ the function spaces by
\begin{align*}
&\mathcal{X}_0(I)= \bigcap_{0 \leq 2/q=1/2-1/r \leq 1/2} L^q(I ; L^r(\Omega) ),\\
&\mathcal{X}(I)= \bigcap_{0 \leq 2/q=1/2-1/r \leq 1/2} L^q(I ; W^{1,r}(\Omega )) ,
\end{align*} 
with norms
\begin{align*}
&\| u\|_{\mathcal{X}_0(I)} = \sup_{0 \leq 2/q=1/2-1/r \leq 1/2} \| u\|_{L^q(I ; L^r )}, \\
&\| u\|_{\mathcal{X}(I)}=\| u\|_{\mathcal{X}_0(I)}+\| \partial_x u\|_{\mathcal{X}_0(I)} .
\end{align*}
Applying Proposition \ref{Strichartz} to (\ref{un1}) and (\ref{un2}), and using Sobolev embedding and H\"{o}lder's inequality, we obtain
\begin{align}
\| u_n \|_{\mathcal{X}_0 (I_n)} &\leq C \| \varphi_n \|_{L^2} +C\| |u_n|^{2\sigma} \partial_x u_n \|_{L^1(I_n;L^2)} \nonumber \\
&\leq C\| \varphi_n \|_{L^2} +CT_n \| u_n \|^{2\sigma +1}_{\mathcal{X}(I_n)}
, \label{St1}\\[10pt]
%%%%%%%%%%%%%%%%%%
\| \partial_x u_n \|_{\mathcal{X}_0 (I_n)} &=\| e^{-\Lambda_n} \partial_x u_n \|_{\mathcal{X}_0 (I_n)} \nonumber \\
&\leq C\| e^{-\Lambda_n (0)}\partial_x \varphi_n \|_{L^2} 
+C\Bigl( \| e^{-\Lambda_n}Q_1(u_n)\|_{L^{\frac{3}{4}}(I_n;L^1) }
+\| e^{-\Lambda_n}Q_2(u_n) \|_{L^1(I_n;L^2)}\Bigr) \nonumber \\
&\leq C\| \partial_x \varphi_n \|_{L^2} +C (T_n^{\frac{3}{4}}+T_n)\| u_n\|^{2\sigma +1}_{\mathcal{X}(I_n)}, \label{St2}
\end{align}
where the constant $C$ is independent of $n$. Collecting (\ref{St1}) and (\ref{St2}), we obtain
\begin{align}
\| u_n\|_{\mathcal{X}(I_n)} 
\leq C M+C (T_n+T_n^{\frac{3}{4}})\| u_n\|^{2\sigma +1}_{\mathcal{X}(I_n)} ,
\label{St3}
\end{align}
where $M$ is given by
\begin{align*}
M\coloneqq \sup_{n \in \mathbb{N}} \| \varphi_n \|_{H^1}.
\end{align*}
We conclude from (\ref{St3}) easily that there exists $T>0$ such that for all $m \in \mathbb{N}$ such that $u_m$ exists on the time interval $I \coloneqq [-T,T]$ and
\begin{align}
\sup_{m\in \mathbb{N}}\| u_m \|_{\mathcal{X}(I)} \leq 2CM. \label{uniform}
\end{align}
%%%%%%%%%%%%%%%%%%
\subsection{Proof of Theorem \ref{H1lwp}}
 Firstly, we prove that $u_m$ converges in $C(I;L^2(\Omega ))$ by the uniform estimate (\ref{uniform}). A straightforward calculation shows
 \begin{align}
\frac{d}{dt} \| u_n-u_m\|_{L^2}^2 &=2\mathrm{Im}(i\partial_t u_n-i\partial_t u_m, u_n-u_m)\notag \\[5pt]
&=-2\mathrm{Im} (\partial^2_x u_n -\partial^2_x u_m, u_n-u_m)-2\mathrm{Re}(|u_n|^{2\sigma}\partial_x u_n-|u_m|^{2\sigma}\partial_x u_m, u_n-u_m) \notag \\[5pt]
&=-2\mathrm{Re}\Bigl( (|u_n|^{2\sigma}-|u_m|^{2\sigma})\partial_x u_n, u_n-u_m\Bigr) -2\mathrm{Re} \Bigl( |u_m|^{2\sigma} (\partial_x u_n-\partial_x u_m), u_n-u_m\Bigr) \notag \\[5pt]
&\leq C \Bigl( \| u_n\|_{L^{\infty}}^{2\sigma -1}+\| u_m\|_{L^{\infty}}^{2\sigma -1}\Bigr) \Bigl( \| \partial_x u_n\|_{L^{\infty}}+\| \partial_x u_m\|_{L^{\infty}}\Bigr) \| u_n-u_m\|_{L^2}^2 \notag \\[5pt]
&\leq C(M) \Bigl( \| \partial_x u_n\|_{L^{\infty}}+\| \partial_x u_m\|_{L^{\infty}}\Bigr) \| u_n-u_m\|_{L^2}^2.
\label{Cauchy2}
\end{align}
Applying the Gronwall inequality, we obtain from (\ref{Cauchy2})
\begin{align*}
\sup_{t\in I} \| u_n(t)-u_m(t) \|_{L^2}^2 \leq \| \varphi_n -\varphi_m \|_{L^2}^2 \mathrm{exp}(C(M)T^{\frac{1}{4}}) .
\end{align*}
This implies that there exists $u \in C(I; L^2(\Omega))$ such that 
\begin{align}
u_m \rightarrow u \ \mathrm{in}\ C(I;L^2(\Omega )).  \label{L2}
\end{align}
By the interpolation inequality, 
\begin{align}
u_n \rightarrow u \quad \mathrm{in} \ C(I; L^r(\Omega )) 
\label{Lr}
\end{align}
for any $r$ with $2\leq r < \infty$. Since $W^{1,r}(\Omega )$ is reflexive if $(q,r)\text{ satisfies} \ 0 \leq 2/q=1/2-1/r < 1/2$, we obtain from (\ref{uniform}) and (\ref{Lr}) 
\begin{align}
\| u \|_{L^q(I; W^{1,r})} \leq \liminf_{n \rightarrow \infty} \| u_n \|_{L^q(I; W^{1,r})}  \leq 2CM
\label{Lr2}
\end{align}
for any $r$ with $2\leq r < \infty$. Since the constant on the RHS of (\ref{Lr2}) is independent of $(q,r)$, taking the limit as $r\rightarrow \infty$, we conclude
\begin{align*}
\| u \|_{L^4(I; W^{1,\infty})} \leq 2CM .
\end{align*}
Therefore, $u \in \mathcal{X}(I)$. We see that $u$ is a solution of (\ref{NLS}) in the distribution sense. We note that the approximate solution $u_m$ conserves the charge and energy. By (\ref{L2}), we obtain $M(u(t))=M(\varphi )$ for all $t \in I$. To show $u$ conserves the energy, we need the following lemma.
\begin{lem}
\label{polem}
Let $\sigma >0$. For every $M>0$, there exists $C(M)>0$, we have　
\begin{align}
| G(u)-G(v)| \leq C(M)\| u-v \|_{L^2}
\label{potential}
\end{align}
for all $u, v \in H^1_0(\Omega )$ such that $\| u\|_{H^1}, \| v\|_{H^1}\leq M$.
\end{lem}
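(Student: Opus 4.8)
The plan is to exploit the already-established regularity $G \in C^1(H^1_0(\Omega); \mathbb{R})$ with $G' = g$, $g(u) = i|u|^{2\sigma}\partial_x u$, and to represent the difference $G(u) - G(v)$ by the fundamental theorem of calculus along the segment joining $v$ to $u$. Writing $w_\theta = (1-\theta)v + \theta u$ for $\theta \in [0,1]$, the map $\theta \mapsto w_\theta$ is $C^1$ into $H^1_0(\Omega)$, so $\theta \mapsto G(w_\theta)$ is $C^1$ and
\begin{align*}
G(u) - G(v) = \int_0^1 G'(w_\theta)(u-v)\, d\theta = \int_0^1 2\mathrm{Re}\bigl( i|w_\theta|^{2\sigma}\partial_x w_\theta, u-v\bigr)\, d\theta.
\end{align*}
The point of this representation is that the spatial derivative always falls on the factor $w_\theta$, which is uniformly bounded in $H^1$, while the difference $u - v$ appears undifferentiated.

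For each fixed $\theta$ I would estimate the integrand by H\"{o}lder's inequality, placing $|w_\theta|^{2\sigma}$ in $L^\infty$, $\partial_x w_\theta$ in $L^2$, and $u - v$ in $L^2$:
\begin{align*}
\left| 2\mathrm{Re}\bigl( i|w_\theta|^{2\sigma}\partial_x w_\theta, u-v\bigr) \right| \leq 2\| w_\theta\|_{L^\infty}^{2\sigma}\| \partial_x w_\theta\|_{L^2}\| u-v\|_{L^2}.
\end{align*}
Since $\Omega$ is an interval, the one-dimensional Sobolev embedding $H^1(\Omega) \hookrightarrow L^\infty(\Omega)$ gives $\|w_\theta\|_{L^\infty} \leq C\|w_\theta\|_{H^1}$, and by convexity $\|w_\theta\|_{H^1} \leq (1-\theta)\|v\|_{H^1} + \theta\|u\|_{H^1} \leq M$, while likewise $\|\partial_x w_\theta\|_{L^2} \leq M$. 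Hence the integrand is bounded by $2(CM)^{2\sigma}M\|u-v\|_{L^2}$, with $C(M) \coloneqq 2C^{2\sigma}M^{2\sigma+1}$, uniformly in $\theta$, and integrating over $\theta \in [0,1]$ yields (\ref{potential}).

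There is no substantial analytic obstacle once the representation is chosen correctly; the essential idea is precisely that the derivative never lands on $u - v$, which is what allows the Lipschitz bound to be measured in the weaker $L^2$ norm rather than in $H^1$. I would stress that this argument does not require differentiating $|u|^{2\sigma}$ pointwise, which would be delicate in the range $0 < \sigma < 1/2$ where $s \mapsto |s|^{2\sigma}$ fails to be $C^1$; instead it uses only the given G\^{a}teaux derivative formula for $G$ and the boundedness of $|w_\theta|^{2\sigma}$ in $L^\infty$, both of which hold for all $\sigma > 0$. A direct subtraction of $G(u) - G(v)$ followed by an integration by parts leads to the same estimate but requires the elementary inequality $\bigl||u|^{2\sigma}-|v|^{2\sigma}\bigr| \leq c(|u|^{2\sigma-1}+|v|^{2\sigma-1})|u-v|$ together with care for the resulting $|v|^{2\sigma-2}$ factor, so I would prefer the fundamental-theorem-of-calculus route as the cleaner alternative.
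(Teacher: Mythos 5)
Your proof is correct and takes essentially the same route as the paper: the paper also writes $G(u)-G(v)=\int_0^1 \frac{d}{ds}G(su+(1-s)v)\,ds=\int_0^1 2\mathrm{Re}\bigl(g(su+(1-s)v),u-v\bigr)\,ds$ and then concludes by Sobolev embedding, exactly your representation with the derivative never landing on $u-v$. Your write-up simply makes explicit the H\"older and convexity steps that the paper leaves implicit.
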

\begin{proof}
Since $G'(u)=g(u)$, we obtain
\begin{align*}
G(u)-G(v) &=\int_{0}^{1} \frac{d}{ds} G(su+(1-s)v)ds \\
&=\int_{0}^{1} 2\mathrm{Re}\biggl( g(su+(1-s)v), u-v \biggr) ds .
\end{align*}
From this identity and Sobolev embedding, the inequality (\ref{potential}) follows.
\end{proof}
By (\ref{uniform}) and (\ref{L2}), we note that $u_m(t) \rightharpoonup u(t) \ \mathrm{in} \ H^1_0(\Omega )$. By the weak lower semicontinuity of the norm, (\ref{L2}) and 
Lemma \ref{polem}, we obtain
\begin{align}
E(u(t)) &\leq \liminf_{m \rightarrow \infty}\bigl( \| \partial_x u_m(t)\|_{L^2}^2+G(u_m(t)) \bigr) \notag \\
&= \liminf_{m \rightarrow \infty} E(u_m(t)) =E(\varphi )
\label{energyineq}
\end{align}
for all $t \in I$.\\
\par
Next, we prove that $u$ is the unique solution of (\ref{NLS}). Suppose that $v\in L^{\infty}(I;H^1_0(\Omega )) \cap L^4(I; W^{1,\infty }(\Omega ))$ is also a solution of (NLS). We set
\begin{align*}
M=\max \{ \| u \|_{L^{\infty}(I;H^1_0)} +\| u\|_{L^4(I; W^{1,\infty})}, \| v \|_{L^{\infty}(I;H^1_0)} +\| v\|_{L^4(I; W^{1,\infty})} \}
\end{align*}
By the same calculation as (\ref{Cauchy2}), we obtain
\begin{align}
\frac{d}{dt} \| u-v\|_{L^2}^2 
\leq C(M) \Bigl( \| \partial_x u\|_{L^{\infty}}+\| \partial_x v\|_{L^{\infty}}\Bigr)
 \| u-v \|_{L^2}^2 .
 \label{unique}
\end{align}
Applying the Gronwall inequality to (\ref{unique}), we conclude that $u=v$ on $I$. By uniqueness and (\ref{energyineq}), we deduce easily that
\begin{align}
E(u(t))=E(\varphi )
\end{align}
for all $t\in I$ and that $u \in C(I;H^1_0(\Omega ))$. \\
\par
Finally, we prove the continuous dependence. Suppose that $\varphi_n \rightarrow \varphi \ \mathrm{in}\ H^1_0(\Omega )$  and let $u_n$ be a solution of (\ref{NLS}) with $u_n(0)=\varphi $. By the same calculation as (\ref{Cauchy2}), we deduce 
\begin{align}
u_n \rightarrow u \ \mathrm{in}\ C(I;L^2(\Omega )). 
\end{align}
By the conservation of charge and energy, and Lemma \ref{polem}, we obtain 
\begin{align}
\| u_n(t)\|_{H^1} \rightarrow \| u(t)\|_{H^1} 
\end{align}
uniformly on $I$. Therefore, we conclude that $u_n \rightarrow u \ \mathrm{in}\ C(I; H^1_0(\Omega ))$.
%%%%%%%%%%%%%%%%%%%%%%%%%%
%%%%%%%%%%%%%%%%%%%%%%%%%%
\subsection{Proof of Theorem \ref{H1gwp}}
 We only prove Theorem \ref{H1gwp} in the case $\sigma >1$. For the proof when $\sigma =1$, see \cite{HO1} or \cite{Wu}. We assume that $u \in C([-T,T]; H^1_0(\Omega ))$  is a solution of (\ref{NLS}). By the conservation of energy and Sobolev embedding, we obtain
 \begin{align*}
 \| \partial_x u\|_{L^2}^2 &= E(\varphi )-G(u) \\[5pt]
 &\leq E(\varphi ) +\frac{1}{\sigma +1} \| u\|_{L^{4\sigma +2}}^{2\sigma +1}\| \partial_x u\|_{L^2} \\
 &\leq E(\varphi ) +\frac{c}{\sigma +1} \| u \|_{H^1}^{2\sigma +2} .
 \end{align*}
 By the conservation of charge, we obtain
 \begin{align}
 f_{\sigma }(\| u\|_{H^1}) \coloneqq \| u\|_{H^1}^2-\frac{c}{\sigma +1} \| u\|_{H^1}^{2\sigma +2} \leq M(\varphi )+E(\varphi ).
 \label{apriori1}
 \end{align}
 We note that $f_{\sigma}$ has an unique local maximum at $\delta >0$, where $\delta$ is 
given by $\delta^{2\sigma}=c^{-1}$. If $\varphi \in H^1_0(\Omega)$ satisfies
\begin{align*}
M(\varphi )+E(\varphi )<f_{\sigma}(\delta )\ \text{and}\  \| \varphi \|_{H^1} <\delta ,
\end{align*}
then by (\ref{apriori1})
\begin{align}
f_{\sigma}(\| u(t)\|_{H^1}) \leq M(\varphi )+E(\varphi )<f_{\sigma}(\delta )
\label{apriori2}
\end{align}
for all $t \in [-T,T]$. Since $\| \varphi \|_{H^1} <\delta $ and $\| u(t)\|_{H^1}$ is continuous, we deduce
\begin{align}
\sup_{t\in [-T,T]}\| u(t) \|_{H^1} < \delta .
\label{apriori}
\end{align}
From the a priori estimate (\ref{apriori}) and Theorem \ref{H1lwp}, the result follows.

%%%%%%%%%%%%%%%%%%%%%%%%%%%%%%%%%
%%%%%%%%%%%%%%%%%%%%%%%%%%%%%%%%%
\section{Proof of Theorem \ref{H1global}}
\label{SH12}
We recall the following approximate problem in Section \ref{SH2}:
 \begin{align}
 \begin{cases}
 i \partial_{t}u_m + \partial_{x}^2 u_m +J_mg(J_mu_m)= 0, \\
 u_m(0)=\varphi .
 \end{cases}
 \label{NLS3}
 \end{align}
We note that $\partial_x^2$ is self-adjoint in $H^{-1}(\Omega )$ with domain $H^1_0(\Omega )$. Let $\varphi \in H^1_0(\Omega )$ be given. It is easily verified that there exists a sequence $(u_m)_{m\in \mathbb{N}}$ of functions of $C((-T_m,T_m); H^1_0(\Omega ))$ such that satisfies (\ref{NLS3}) and
\begin{align}
M(u_m (t))=M(\varphi ) \quad \text{and} \quad E_m(u_m(t))=E_m(\varphi )
\label{mcons}
\end{align} 
for all $t \in (-T_m,T_m)$, where $E_m$ is defined as (\ref{menergy}). We use the conservation of energy in order to obtain the uniform $H^1$ estimates of $(u_m)_{m\in \mathbb{N}}$. We have
\begin{align*}
\| \partial_x u_m \|_{L^2}^2 &= E_m(\varphi )-G_m(u_m) \\[5pt]
 &\leq E_m (\varphi ) +\frac{1}{\sigma +1} \| J_mu_m \|_{L^{4\sigma +2}}^{2\sigma +1}\| \partial_x J_mu_m \|_{L^2} .
\end{align*}
By using Gagliardo-Nirenberg's inequality 
\begin{align*}
\| f\|_{L^{4\sigma +2}}^{2\sigma +1} \leq C\| f\|_{L^2}^{\sigma +1}\| \partial_x f\|_{L^2}^{\sigma}
\end{align*}
and Proposition \ref{Jm1}, we obtain
\begin{align}
\| \partial_x u_m \|_{L^2}^2 &\leq E_m(\varphi )+\frac{C}{\sigma +1}
\| u_m\|_{L^2}^{\sigma +1}\| \partial_x  u_m\|_{L^2}^{\sigma +1} \notag \\
&=E_m(\varphi )+\frac{C}{\sigma +1} \| \varphi \|_{L^2}^{\sigma +1}\| \partial_x u_m\|_{L^2}^{\sigma +1} ,
\label{H1est3}
\end{align}
where in the last equality we have used the conservation of charge. Since $\sigma +1<2$, applying 
Young's inequality to (\ref{H1est3}), we deduce the following estimate
\begin{align*}
\| \partial_x u_m(t)\|_{L^2}^2 \leq C(\| \varphi \|_{H^1}) 
\end{align*}
for all $t \in (-T_m,T_m)$. This implies that $T_m= \infty$ for every $m\in \mathbb{N}$ and
\begin{align}
M \coloneqq \sup_{m \in \mathbb{N}} \| u_m\|_{C(\mathbb{R};H^1_0)} <\infty .
\label{H1est4}
\end{align}
By the equation (\ref{NLS3}) and the estimate $\| g_m(u_m(t))\|_{L^2} \leq C(M)$ for all $t\in \mathbb{R}$, we obtain
\begin{align}
\sup_{m\in \mathbb{N}}\| \partial_t u_m \|_{C(\mathbb{R};H^{-1})} \leq C(M) .
\label{H1est5} 
\end{align}
By applying (\ref{H1est4}), (\ref{H1est5}), and the abstract version of the 
Ascoli-Arzel\`{a} theorem, we deduce that
\begin{align*}
u \in L^{\infty}(\mathbb{R}; H^1_0(\Omega )) \cap W^{1,\infty}(\mathbb{R};H^{-1}(\Omega ))
\end{align*}
and that there exists a subsequence, which we still denote by $(u_m)_{m\in \mathbb{N}}$, such that
\begin{align}
u_m(t) \rightharpoonup u(t) \ \mathrm{in} \ H^1_0(\Omega )
\label{H1weak}
\end{align}
for all $t \in \mathbb{R}$. To prove that $u$ is a weak solution of (\ref{NLS}), we need the following lemma. 
%%%%%%%%%%%%%%%%%%%%%%%%%
\begin{lem}
\label{glem}
For all $t \in \mathbb{R}$, $g_m(u_m(t)) \rightharpoonup g(u(t))$ in $L^2(\Omega )$ .
\end{lem}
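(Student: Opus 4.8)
The plan is to establish the weak convergence by testing $g_m(u_m(t))$ against a dense class and passing to the limit, using the self-adjointness of $J_m$ together with the local compactness supplied by the Sobolev embedding on the compact support of the test function. Since $\|g_m(u_m(t))\|_{L^2}\le C(M)$ uniformly in $m$, weak convergence in $L^2(\Omega)$ follows once I prove $(g_m(u_m(t)),\psi)\to(g(u(t)),\psi)$ for every $\psi\in C^\infty_c(\Omega)$. Fix such a $\psi$ and set $w_m=J_mu_m(t)$. By the self-adjointness of $J_m$ (Proposition \ref{Jm1} (i)),
\[
(g_m(u_m(t)),\psi)=(J_mg(w_m),\psi)=(g(w_m),J_m\psi)=(g(w_m),J_m\psi-\psi)+(g(w_m),\psi).
\]
Here $\|g(w_m)\|_{L^2}\le\|w_m\|_{L^\infty}^{2\sigma}\|\partial_x w_m\|_{L^2}\le C(M)$ uniformly in $m$, by Sobolev embedding and $\|J_m\|_{\mathcal L(H^1_0,H^1_0)}\le1$ applied to the uniform bound (\ref{H1est4}); since $J_m\psi\to\psi$ in $L^2$ (Proposition \ref{Jm1} (iv)), the first term tends to $0$. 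It remains to treat $(g(w_m),\psi)$.

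First I would identify the weak limit of $w_m$. From (\ref{H1weak}) we have $u_m(t)\rightharpoonup u(t)$ in $H^1_0(\Omega)$, while Proposition \ref{Jm1} (v) together with (\ref{H1est4}) gives $J_mu_m(t)-u_m(t)\rightharpoonup0$ in $H^1_0(\Omega)$; adding these, $w_m\rightharpoonup u(t)$ in $H^1_0(\Omega)$, and in particular $\partial_x w_m\rightharpoonup\partial_x u(t)$ in $L^2(\Omega)$. Let $K=\mathrm{supp}\,\psi$, a compact subinterval. Restricting to $K$ and using the compact embedding $H^1(K)\hookrightarrow C(\overline K)$, the weak convergence $w_m\rightharpoonup u(t)$ upgrades to $w_m\to u(t)$ uniformly on $K$. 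As the $w_m$ are uniformly bounded in $L^\infty$ and $s\mapsto s^{2\sigma}$ is uniformly continuous on bounded subsets of $[0,\infty)$, this yields $|w_m|^{2\sigma}\to|u(t)|^{2\sigma}$ uniformly on $K$.

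Finally I would pass to the limit in the nonlinearity by splitting
\[
(g(w_m),\psi)=\int_K i\bigl(|w_m|^{2\sigma}-|u(t)|^{2\sigma}\bigr)\partial_x w_m\,\overline{\psi}\,dx+\int_\Omega i|u(t)|^{2\sigma}\,\partial_x w_m\,\overline{\psi}\,dx.
\]
The first integral is bounded by $\||w_m|^{2\sigma}-|u(t)|^{2\sigma}\|_{L^\infty(K)}\|\partial_x w_m\|_{L^2}\|\psi\|_{L^2}\to0$. In the second integral $i|u(t)|^{2\sigma}\overline{\psi}\in L^2(\Omega)$ is a fixed function (bounded with compact support), so the weak convergence $\partial_x w_m\rightharpoonup\partial_x u(t)$ in $L^2(\Omega)$ sends it to $\int_\Omega i|u(t)|^{2\sigma}\partial_x u(t)\,\overline{\psi}\,dx=(g(u(t)),\psi)$. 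Combining the three paragraphs gives $(g_m(u_m(t)),\psi)\to(g(u(t)),\psi)$, and the density of $C^\infty_c(\Omega)$ in $L^2(\Omega)$ together with the uniform $L^2$ bound completes the proof. The main obstacle is exactly this passage to the limit in $|w_m|^{2\sigma}\partial_x w_m$, where one factor converges only weakly while the other must be handled through the nonsmooth map $s\mapsto s^{2\sigma}$ (relevant for $0<\sigma<1$); it is the localization to $\mathrm{supp}\,\psi$, where the Sobolev embedding is compact, that converts the weak convergence of $w_m$ into the strong convergence needed to control the nonlinear factor.
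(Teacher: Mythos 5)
Your proof is correct, and although it shares the paper's skeleton---test against $\psi\in C^\infty_c(\Omega)$, use the uniform bound (\ref{H1est4}), exploit compactness on $\mathrm{supp}\,\psi$ for the nonlinear factor, and finish with weak $L^2$ convergence of the derivatives---the execution is genuinely different. The paper telescopes $g_m(u_m)-g(u)$ into five terms, comparing $J_mu_m$, $u_m$ and $u$ separately in the factor $|\cdot|^{2\sigma}$ and in the derivative factor, and it controls the nonlinear differences by pointwise inequalities that force a case split: for $\sigma\geq 1/2$ it uses $||a|^{2\sigma}-|b|^{2\sigma}|\leq c\,(|a|^{2\sigma-1}+|b|^{2\sigma-1})|a-b|$ together with Rellich--Kondrachov in $L^2(B)$, while for $0<\sigma<1/2$ it uses $||a|^{2\sigma}-|b|^{2\sigma}|\leq|a-b|^{2\sigma}$ plus H\"older's inequality with a factor $|B|^{(1-2\sigma)/2}$. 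You instead work with $w_m=J_mu_m(t)$ throughout, identify its weak $H^1_0$ limit $u(t)$ once (combining (\ref{H1weak}) with Proposition \ref{Jm1}(v)), and upgrade this to uniform convergence on $\overline K$ via the compact embedding $H^1(K)\hookrightarrow C(\overline K)$; composing with the map $s\mapsto s^{2\sigma}$, uniformly continuous on bounded sets, then disposes of the nonlinear factor for every $\sigma>0$ at once, with no case analysis---this is the main structural gain of your argument. Your treatment of the outer $J_m$ (self-adjointness plus $J_m\psi\to\psi$) is essentially the dual formulation of the paper's appeal to Proposition \ref{Jm1}(v) applied to the bounded sequence $g(J_mu_m)$, so that step is only cosmetically different, and your final term coincides with the paper's last one. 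As for what each route buys: yours is shorter, uniform in $\sigma$, and isolates the single nontrivial fact used (a compact operator sends weakly convergent sequences to strongly convergent ones, here $H^1(K)\hookrightarrow\hookrightarrow C(\overline K)$, a one-dimensional tool on the same footing as Rellich--Kondrachov); the paper's elementary inequalities are more quantitative, giving an explicit modulus in $\|J_mu_m-u_m\|_{L^2(B)}$, and reuse estimates already employed in Section \ref{SH2}.
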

\begin{proof}
Let $\omega \in C^{\infty}_{c}(\Omega )$ and let $B=\mathrm{supp} \ \omega$. We write
\begin{align*}
\Bigl( g_m(u_m)-g(u) ,\omega \Bigr) &=\Bigl( J_mg(J_mu_m)-g(J_mu_m) , \omega  \Bigr) \\[5pt]
&\quad +\Bigl( i|Ju_m|^{2\sigma}\partial_x J_mu_m-i|u_m|^{2\sigma}\partial_x J_mu_m, \omega \Bigr) \\[5pt]
&\quad +\Bigl( i|u_m|^{2\sigma}\partial_x J_mu_m-i|u|^{2\sigma}\partial_x J_mu_m, \omega \Bigr) \\[5pt]
&\quad +\Bigl( i|u|^{2\sigma}\partial_x J_mu_m-i|u|^{2\sigma}\partial_x u_m, \omega \Bigr) \\[5pt]
&\quad +\Bigl( i|u|^{2\sigma}\partial_x u_m-i|u|^{2\sigma}\partial_x u, \omega \Bigr) \\[5pt]
&=\Rnum{2}_1+\Rnum{2}_2+\Rnum{2}_3+\Rnum{2}_4+\Rnum{2}_5. 
\end{align*}
Since $g(J_mu_m)$ is bounded in $L^2(\Omega )$ due to (\ref{H1est4}), $\Rnum{2}_1 \rightarrow 0$ by Proposition \ref{Jm1}. In the case of $1/2\leq \sigma <1$, we estimate $\Rnum{2}_2$ as
\begin{align*}
|\Rnum{2}_2| &\leq \| \omega \|_{L^{\infty}} \| \partial_x J_mu_m \|_{L^2}\| \ |J_mu_m|^{2\sigma} -|u_m|^{2\sigma} \|_{L^2(B)} \\
&\leq C(M)\| J_mu_m-u_m\|_{L^2(B)} .
\end{align*}
Since $u_m$ is bounded in $H^1_0(\Omega )$, it follows $J_mu_m -u_m 
\rightharpoonup 0$ in $H^1_0(\Omega )$, hence  $J_mu_m -u_m \rightarrow 0$ in $L^2(B )$ by the Rellich-Kondrachov theorem. Therefore, $\Rnum{2}_2 \rightarrow 0$. In the case of $0< \sigma <1/2$, we estimate $\Rnum{2}_2$ as
\begin{align*}
|\Rnum{2}_2| &\leq C(M)\| \ |J_mu_m|^{2\sigma} -|u_m|^{2\sigma} \|_{L^2(B)} \\
&\leq C(M)\| J_mu_m-u_m\|_{L^{4\sigma}(B)}^{2\sigma} \\
&\leq C(M)|B|^{\frac{1-2\sigma}{2}}\| J_mu_m-u_m\|_{L^2(B)}^{2\sigma}\\
& \longrightarrow 0 \quad \text{as} \ \ m\rightarrow \infty .
\end{align*}
Here, we used an elementary inequality 
\begin{align*}
||u|^{2\sigma}-|v|^{2\sigma}|\leq |u-v|^{2\sigma}
\end{align*}
in the second inequality.
Similarly, we can show 
$\Rnum{2}_3,\  \Rnum{2}_4 \rightarrow 0$. Since $\partial_x u_m \rightharpoonup \partial_x u$ in $L^2(\Omega )$, we deduce $\Rnum{2}_5 \rightarrow 0$.
\end{proof}
It follows from (\ref{H1weak}) and Lemma \ref{glem} that $u$ is a solution of (\ref{NLS}) in the distribution sense. Taking the $H^{-1}$-$H^1_0$ duality product of the equation (\ref{NLS}), we deduce 
\begin{align}
\frac{d}{dt} \| u(t) \|_{L^2}^2 =0 
\end{align}
for all $t \in \mathbb{R}$, and so
\begin{align}
M(u(t)) =M(\varphi ) .
\label{charge}
\end{align}
By (\ref{mcons}), (\ref{charge}) and (\ref{H1weak}), we deduce
\begin{align}
u_m \rightarrow u \ \mathrm{in} \ C_{\mathrm{loc}}(\mathbb{R};L^2(\Omega )) .
\label{CL2}
\end{align}
It follows from (\ref{mcons}), (\ref{H1weak}), (\ref{CL2}) and Lemma \ref{polem} that
\begin{align}
E(u(t)) \leq E(\varphi )
\end{align}
for all $t \in \mathbb{R}$. This completes the proof.\\[30pt]
\noindent
{\normalsize\bfseries\boldmath Acknowledgments}\\[15pt]
\indent We would like to thank the referees for helpful comments concerning Theorem \ref{H1global}.
%% The Appendices part is started with the command \appendix;
%% appendix sections are then done as normal sections
%% \appendix

%% \section{}
%% \label{}

%% If you have bibdatabase file and want bibtex to generate the
%% bibitems, please use
%%
%%  \bibliographystyle{elsarticle-num} 
%%  \bibliography{<your bibdatabase>}

%% else use the following coding to input the bibitems directly in the
%% TeX file.

\end{document}